\newtheorem{Satz}{Satz}[section]
\newtheorem{Thm}[Satz]{Theorem}
\newtheorem{Corol}[Satz]{Corollary}
\theoremstyle{remark}
\theoremstyle{definition} 
\newtheorem{Def}[Satz]{Definition}
\newtheorem{Rem}[Satz]{Remark}
\newtheorem{Prop}[Satz]{Proposition}
\newcommand{\hra}{\ensuremath{\hookrightarrow}}
\newcommand{\vr}{\ensuremath{\varrho}}
\newcommand{\ol}{\ensuremath{\overline}}
\newcommand{\la}{\ensuremath{\lambda}}
\newcommand{\ve}{\ensuremath{\varepsilon}}
\newcommand{\vp}{\ensuremath{\varphi}}
\newcommand{\Lci}{\ensuremath{\overset{\circ}{L}}{}^r_{p}(\real^n)}
\newcommand{\Lciv}{\ensuremath{\overset{\circ}{L}}{}^r_{p}(\ell_q,\real^n)}
\newcommand{\Lcive}{\ensuremath{\overset{\circ}{L}}{}^r_{p}(\ell_q,\real)}
\newcommand{\Lv}{\ensuremath{L^r_{p}(\ell_q,\real^n)}}
\newcommand{\Lve}{\ensuremath{L^r_{p}(\ell_q,\real)}}
\newcommand{\Lr}{\ensuremath{L^r_{p}(\real^n)}}
\newcommand{\Lw}{\ensuremath{L_p (w_\alpha,\rn)}}
\newcommand{\Hv}{\ensuremath{H^{\vr} L_{p'} (\ell_{q'},\rn)}}
\newcommand{\Hve}{\ensuremath{H^{\vr} L_{p'} (\ell_{q'},\real)}}
\newcommand{\Hr}{\ensuremath{H^{\vr} L_{p'} (\rn)}}
\newcommand{\di}{\ensuremath{{\mathrm d}}}
\newcommand{\zn}{\ensuremath{{\mathbb Z}^n}}
\newcommand{\rn}{\ensuremath{{\mathbb R}^n}}
\newcommand{\real}{\mathbb{R}}
\newcommand{\nat}{\mathbb{N}}
\newcommand{\ganz}{\mathbb{Z}}
\newcommand{\eq}{equation}
\newcommand{\supp}{\ensuremath{\mathrm{supp \,}}}
\title{\textbf{On the boundedness of singular integrals in Morrey spaces and its preduals}} 
\author{
Marcel Rosenthal \& Hans-J{\"u}rgen Schmeisser\\
Universit\"at Jena, Mathematisches Institut\\
Ernst-Abbe-Platz 2, 07743 Jena, Germany\\
email: marcel.rosenthal@uni-jena.de $\cdot$ mhj@uni-jena.de}
\begin{document}

%

\maketitle

\begin{abstract}
We reduce the boundedness of operators in Morrey spaces $L_p^r(\rn)$, its preduals, $H^{\vr}L_p (\rn)$, and their preduals $\Lci$ to the boundedness of the appropriate operators in Lebesgue spaces, $L_p(\rn)$. Hereby, we need a weak condition with respect to the operators which is satisfied for a large set of classical operators of harmonic analysis including singular integral operators and the Hardy-Littlewood maximal function. 
The given vector-valued consideration of these issues is a key ingredient for various applications in harmonic analysis.
\end{abstract}
\textbf{Keywords:} singular integral operators, Calder\'{o}n-Zygmund operators, Morrey spaces, predual Morrey spaces
\\
\textbf{Math Subject Classifications: Primary 42B35, 46E30, 42B15, 42B20; Secondary 42B25.}

\section{Introduction}
Let $\Lci$ be the completion of $D(\rn)$ in $L_p^r(\rn)$, where
	\begin{align*} 
		\left\|f|{ L}^r_{p}(\real^n)\right\|= & \sup_{x\in\real^n} \sup_{R>0} R^{-\left(\frac{n}{p}+r\right)} \left\|f|L_p(B_R(x))\right\|, 1<p<\infty, -\frac{n}{p}\le r <0.
	\end{align*}
Then we have
\begin{\eq} \label{GiF1}
  \left(\Lci\right)''\cong \left(H^{\vr}L_{p'} (\rn)\right)'\cong L_p^r(\rn),
\end{\eq}
where the second duality assertion is due to \cite{Zor86, Kal98, AX04, GM13, RoT14_2} and the first assertion is observed by \cite{AX12} and proved by \cite{RoT14_2}.
Roughly speaking in this paper we prove that the $L_p(\rn)$-boundedness of an operator $T$ satisfying the condition 
\begin{equation} \label{p30:GG}
  |(Tf)(y)|\leq c \int_{\real^n} \frac{|f(z)|}{|y-z|^n} \di z \quad \text{ for all } f\in D(\rn) \text{ and } y\notin \supp(f), 
\end{equation}
implies its boundedness in $\Lci$. Therefrom, under some additional conditions with respect to $T$ we get  also the boundedness of $T$ in $H^{\vr}L_p (\rn)$ and $L_p^r(\rn)$ by \eqref{GiF1} and duality arguments. 
Our paper can be considered as an extension of the new approach given in \cite{RT13} and \cite{RoT14_2} to a wider class of operators and to the vector-valued situation. Let us mention that the extension of operators
of this type and related norm estimates have to be treated with greater care than in many related papers investigating mapping properties of operators in $L_p^r(\rn)$. We refer to Remark \ref{LS} for the relation of our paper 
to the existing literature. In particular, we cannot expect an unique extension to Morrey spaces $L_p^r(\rn)$. On the contrary it turned out that there are infinitely many possible extension operators (cf. \cite[Remark 5.3]{RoT14_2}).
Let us also mention that the vector-valued situation under consideration is crucial
having in mind applications as a Michlin-H\"ormander type theorem (and hence applications to Navier-Stokes equations cf. \cite{Tr12} and \cite[Remark 4.3]{RT13}), Littlewood-Paley theory for Morrey spaces and its preduals as well as for Lizorkin representations of Triebel-Lizorkin-Morrey spaces.
The given results are partially contained in \cite{Ros13}.
Condition \eqref{p30:GG} is due to Soria and Weiss \cite{SW94} who transferred the boundedness of singular operators on Lebesgue spaces to the boundedness of these operators in some weighted Lebesgue spaces. 
\par
The paper is organized as follows. Basic definitions and preliminaries which are needed later on are collected in Section 2. Duality theory for vector-valued Morrey-type spaces is treated in Section 3. The main results can be found in Theorem \ref{TDp1:GG} (preduals of Morrey spaces) and Theorem \ref{TDp2:GG} (Morrey spaces as bidual spaces). In final Section 4 we prove our main results concerning the transference of mapping properties of operators satisfying condition \eqref{p30:GG} to vector-valued Morrey type spaces. The general theorem is presented in Subsection 4.1 (Theorem \ref{vB:thm}) following the method developed in \cite{RT13} and \cite{RoT14_2}.
As a consequence of our main theorem we obtain mapping properties for various classes of operators in vector-valued Morrey-type spaces. Subsection 4.2 is concerned with Calder\'{o}n-Zygmund operators. Here we present also an alternative approach via weighted spaces (Theorem \ref{vB2:thm}). Maximal operators of Hardy-Littlewood and Calder\'{o}n-Zygmund type as well as related vector-valued inequalities are considered in Subsection 4.3. The final Subsection 4.4 is devoted to some classes of Fourier multipliers such as characteristic functions, smooth multipliers and Bochner-Riesz mulipliers at the critical index. 

\section{Definitions and Preliminaries}
\subsection{Notation}

We use standard notation. Let $\nat$ be the collection of all natural numbers and $\nat_0 = \nat \cup \{0 \}$. Let $\rn$ be the Euclidean $n$-space, where
$n\in \nat$. Put $\real = \real^1$. 
Let $S(\rn)$ be the Schwartz space of all complex-valued rapidly decreasing infinitely differentiable functions on $\rn$ and let $S' (\rn)$ be the space of all tempered distributions on $\rn$.
Let $D(\rn) = C^\infty_0 (\rn)$ be the collection of all infinitely differentiable complex-valued  functions 
with compact support in $\rn$, where the support of a function $f$ is abbreviated by $\supp(f)$. 
Moreover, denotes $C(\rn)$ and $\text{Lip}(\rn)$ the collection of all continuous and Lipschitz continuous, respectively, and bounded complex-valued functions defined on $\rn$.
Furthermore, $L_p (\rn)$ with $1\le p <\infty$, is the standard complex Banach space with respect to the Lebesgue measure,
normed by
\[
\| f \, | L_p (\rn) \| = \Big( \int_{\rn} |f(x)|^p \, \di x \Big)^{1/p}.
\]
For a measurable subset $M$ of $\rn$ we similarly define $L_p (M)$.
Moreover, $|M|$ stands for the Lebesgue measure of $M$ and $\chi_M$ for the characteristic function on $M$. As usual $\ganz$ is the collection of all integers; and $\zn$ where $n\in \nat$ denotes the
lattice of all points $m= (m_1, \ldots, m_n) \in \rn$ with $m_j \in \ganz$. 
As usual, $L_p^{\mathrm{loc}}(\rn)$ collects all equivalence classes of almost everywhere coinciding measurable complex locally $p$-integrable functions, hence $f\in L_p (M)$ for any bounded measurable set $M$ in $\rn$.
For any $p\in (1,\infty)$ we denote by $p'$ the conjugate index, namely, $1/p+1/{p'}= 1$.
For Banach spaces $X$ and $Y$ and an operator $T:X\rightarrow Y$ \[T:X\hra Y\]
means, that the operator is bounded, that is,
\[
  \left\|Tx|Y\right\|\le c \left\|x|X\right\| 
\]
where the the constant $c$ is independent of $x\in X$.
Let $D(\rn)\hra X$.
A bounded operator $\widetilde{T}$ acting in $X$, hence $\widetilde{T}:X\hra X$, is called an extension of $T$ to $X$ if it coincides on $D(\rn)$ with $T$. 
We denote the Fourier transform of $f$ on $S(\rn)$ or $S'(\rn)$ by $\hat{f}$ and its inverse by $\check{f}$ where the normalisation of $\hat{f}$ does not matter for our estimates. 
The concrete value of constants may vary from one
formula to the next, but remains the same within one chain of (in)equalities. Finally, $A\cong B$ is an abbreviation that there are two constants $c$, $C>0$ such that $c A \leq B \leq C A$. 

\subsection{Morrey spaces, duals and preduals}

\begin{Def}{} \label{d1:def}
\upshape 
	For $1< p< \infty$ and $-\frac{n}{p}\leq r<0$ we define \textit{Morrey spaces} as  
	\begin{equation*}
				{ L}^r_{p}(\real^n)\equiv\{f \in L_p^{\text{loc}}(\real^n) \mbox{ : } \left\|f|{ L}^r_{p}(\real^n)\right\|<\infty\}
	\end{equation*}
	with the 	norm  
	\begin{align*} 
		\left\|f|{ L}^r_{p}(\real^n)\right\|\equiv & 
		\sup_{M\in\ganz^n} \sup_{J\in\ganz} 2^{J\left(\frac{n}{p}+r\right)} \left\|f|L_p(Q_{JM})\right\|
		\\ \cong &
		\sup_{x\in\real^n} \sup_{R>0} R^{-\left(\frac{n}{p}+r\right)} \left\|f|L_p(B_R(x))\right\|
				, 
	\end{align*}	
		where $Q_{JM}\equiv Q_{J,M}\equiv 2^{-J}\left(M+\left[-1,1\right]^n\right)$ and $B_R(x)$ denotes the ball with radius $R$ centered at $x$. \\
		Moreover, $\stackrel{\circ}{{ L}^r_{p}}(\rn)$ denotes the closure of $D(\rn)$ with respect to $\left\|\cdot|{L}^r_{p}(\real^n)\right\|$.
\end{Def}
\begin{Def}  \label{D2.3}
Let $1<p<\infty$ and $-n <\vr <-n/p$. 
Then the \textit{predual Morrey spaces} $H^{\vr} L_p (\rn)$ collects all $h\in S'(\rn)$ 
which can be represented as
\begin{align}   \label{2.9}
\begin{split}
&h= \sum_{J\in \ganz, M\in \zn} \lambda_{J,M} a_{J,M} \quad \text{in} \quad S'(\rn) \quad \text{with} 
\\ &\supp a_{J,M} \subset {Q_{J,M}},\qquad \| a_{J,M} \, | L_p (\rn) \|\le 2^{-J(\frac{n}{p} + \vr)}, 
\end{split}
\end{align}
such that
\begin{\eq}    \label{2.10a}
\sum_{J\in \ganz, M \in \zn} |\lambda_{J,M}| <\infty.
\end{\eq}
Furthermore,
\[ 
\| h \, | H^{\vr} L_p (\rn) \| \equiv \inf \sum_{J\in \ganz, M \in \zn} |\lambda_{J,M}|
\] 
where the infimum is taken over all representations \eqref{2.9}, \eqref{2.10a}.
\end{Def}
\begin{Rem} \label{GF3}
The notation of $H^{\vr} L_p (\rn)$ as a predual will be justified in the Theorem \ref{TDp1:GG}. 
  By triangular and H\"older's inequality \eqref{2.9} and \eqref{2.10a} ensure that the convergence in \eqref{2.9} is unconditionally in $L_u(\rn)$, where $\vr u =-n$. In particular it holds $H^{\vr} L_p (\rn) \hra L_u(\rn)$  and we have $1<u<p$ (cf. \cite[(3.10)]{RoT14_2}). 
	Let $L_p (\rn, w_\alpha)$ with $1< p <\infty$ and $w_\gamma (x)
= (1+|x|^2)^{\gamma/2}$, $\gamma \in \real$, be the weighted Lebesgue spaces, normed by
\begin{\eq} \label{GiE}
\| f \, | \Lw\| = \| w_\alpha f \, | L_p (\rn) \|. 
\end{\eq} 
Then it holds
\begin{\eq} \label{74:GGE}
  L_{p} (w_\alpha,\rn) \hra H^{\vr} L_p (\rn)
\end{\eq} 
with $\alpha>n/{p'}$  (cf. \cite[(3.5)]{RoT14_2}).
Furthermore, $D(\rn)$, $S(\rn)$ are dense both in $\Lci$ and $H^{\vr} L_p (\rn)$. $H^{\vr} L_p (\rn)$ and $\Lr$ are Banach spaces and $L_u(\rn)\hra\Lr\hra\Lw$ for $u=-n/r$ and $\alpha<-n/p$ (cf. \cite[Thm. 3.1]{RoT14_2}). The last embedding as well as \eqref{74:GGE} can be sharpened cf. \eqref{74:GG} below. 
\end{Rem}
\begin{Def} \label{GTTF} Let $1 < p <\infty$, $-n <\vr <-n/p$.
 	Let $H^{\vr} L_p (\rn)_F^\ve$ be the following subspace of $H^{\vr} L_p (\rn)$ defined as
\begin{align*}
  H^{\vr} L_p (\rn)_F^\ve \equiv & \left\{ \vp\in H^{\vr} L_p (\rn) \left| \text{ there exists an } L \in \nat \right.\right. \\ &\left. \left. 
\quad \text{ such that } \vp= \sum_{\substack{J\in \ganz, M\in \zn\\ |J|\le L, |M|\le L}} {h_{J,M}},\ \supp h_{J,M}\subset Q_{J,M} \text{ and }   
	\right.\right. \\ & \left. \quad
	\sum_{\substack{|J|\le L\\ |M|\le L}} 2^{J(\frac{n}{p} + \vr)} \| h_{J,M} \, | L_p (Q_{J,M})\|\le (1+\ve) \| \vp | H^{\vr} L_p (\rn) \| 
	\right. \}.
\end{align*}
\end{Def} 
\begin{Prop} \label{PttL}
Let $1 < p <\infty$, $-n <\vr <-n/p$.
Then 
$H^{\vr} L_p (\rn)_F^\ve$ is dense in $ H^\vr L_p (\rn) $. 
\end{Prop}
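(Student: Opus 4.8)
The plan is to approximate an arbitrary $h\in H^\vr L_p(\rn)$ by finite truncations of a near-optimal atomic decomposition of type \eqref{2.9}, \eqref{2.10a}. If $h=0$ there is nothing to do, since then $0\in H^\vr L_p(\rn)_F^\ve$ (take $L=1$ and all $h_{J,M}=0$), so assume $\|h\,|H^\vr L_p(\rn)\|>0$ and fix $\eta\in(0,1)$ small enough that $(1+\eta)/(1-\eta)\le 1+\ve$. Since by Definition \ref{D2.3} the norm $\|\cdot\,|H^\vr L_p(\rn)\|$ is the infimum of $\sum_{J,M}|\lambda_{J,M}|$ over all representations \eqref{2.9}, \eqref{2.10a}, I would first choose one such representation $h=\sum_{J\in\ganz,\,M\in\zn}\lambda_{J,M}a_{J,M}$ (convergence in $S'(\rn)$, $\supp a_{J,M}\subset Q_{J,M}$, $\|a_{J,M}\,|L_p(\rn)\|\le 2^{-J(\frac np+\vr)}$) with $\sum_{J,M}|\lambda_{J,M}|\le(1+\eta)\,\|h\,|H^\vr L_p(\rn)\|$.

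Next I would truncate: put $h^{(L)}:=\sum_{|J|\le L,\,|M|\le L}\lambda_{J,M}a_{J,M}$, a finite sum of $L_p$-functions. By Remark \ref{GF3} the series \eqref{2.9} converges unconditionally in $L_u(\rn)$, so the tail $h-h^{(L)}=\sum_{|J|>L\ \text{or}\ |M|>L}\lambda_{J,M}a_{J,M}$ holds in $S'(\rn)$ and is again an admissible representation in the sense of Definition \ref{D2.3}; hence $\|h-h^{(L)}\,|H^\vr L_p(\rn)\|\le \sum_{|J|>L\ \text{or}\ |M|>L}|\lambda_{J,M}|\to 0$ as $L\to\infty$ by \eqref{2.10a}. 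Thus $h^{(L)}\to h$ in $H^\vr L_p(\rn)$, and in particular $\|h^{(L)}\,|H^\vr L_p(\rn)\|\to\|h\,|H^\vr L_p(\rn)\|>0$ by continuity of the norm; I then fix $L$ so large that additionally $\|h^{(L)}\,|H^\vr L_p(\rn)\|\ge(1-\eta)\,\|h\,|H^\vr L_p(\rn)\|$.

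It remains to verify $h^{(L)}\in H^\vr L_p(\rn)_F^\ve$ for this $L$. Setting $h_{J,M}:=\lambda_{J,M}a_{J,M}$ for $|J|\le L$, $|M|\le L$, one has $\supp h_{J,M}\subset Q_{J,M}$, $h^{(L)}=\sum_{|J|\le L,\,|M|\le L}h_{J,M}$, and, since $\supp a_{J,M}\subset Q_{J,M}$ gives $\|h_{J,M}\,|L_p(Q_{J,M})\|=|\lambda_{J,M}|\,\|a_{J,M}\,|L_p(\rn)\|\le|\lambda_{J,M}|\,2^{-J(\frac np+\vr)}$,
\begin{align*}
  \sum_{|J|\le L,\,|M|\le L}2^{J(\frac np+\vr)}\|h_{J,M}\,|L_p(Q_{J,M})\|
  &\ \le\ \sum_{J,M}|\lambda_{J,M}|\ \le\ (1+\eta)\,\|h\,|H^\vr L_p(\rn)\| \\
  &\ \le\ \frac{1+\eta}{1-\eta}\,\|h^{(L)}\,|H^\vr L_p(\rn)\|\ \le\ (1+\ve)\,\|h^{(L)}\,|H^\vr L_p(\rn)\|.
\end{align*}
Hence $h^{(L)}\in H^\vr L_p(\rn)_F^\ve$, and since $h^{(L)}\to h$ the subspace is dense. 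The construction is essentially routine; the only genuine point of care is that Definition \ref{GTTF} demands the atomic-norm sum be controlled by $\|h^{(L)}\,|H^\vr L_p(\rn)\|$ and not by $\|h\,|H^\vr L_p(\rn)\|$, which forces one first to establish $\|h^{(L)}\|\to\|h\|$ along the truncation and then to carry out the small $\eta/(1-\eta)$ bookkeeping, with $h=0$ treated separately. A secondary (but immediate) check is the identity $h-h^{(L)}=\sum_{\text{tail}}\lambda_{J,M}a_{J,M}$ in $S'(\rn)$, which follows from the unconditional convergence recorded in Remark \ref{GF3}.
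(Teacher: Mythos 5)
Your proposal is correct and follows essentially the same route as the paper: truncate a near-optimal atomic decomposition, use the tail of the $\ell_1$ coefficient sum to get convergence $h^{(L)}\to h$ in $H^\vr L_p(\rn)$, and then use $\|h^{(L)}\|\to\|h\|$ to convert the bound against $\|h\|$ into the bound against $\|h^{(L)}\|$ required by Definition \ref{GTTF}. If anything, you are slightly more careful than the paper, which leaves the $h=0$ case and the choice of $L$ large enough that $\|h^{(L)}\|\ge(1-\eta)\|h\|$ implicit.
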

\begin{proof}    
Let $h\in H^{\vr} L_p (\rn) $ and $\ve>0$.
Let $h= \sum_{J\in \ganz, M\in \zn} \lambda_{J,M} a_{J,M}$ in $S'(\rn)$ such that $\sum_{J\in \ganz, M \in \zn} |\lambda_{J,M}| \le (1+\ve/2) \| h \, | H^{\vr} L_p (\rn) \|$ with $\supp a_{J,M} \subset {Q_{J,M}}$ and $ \| a_{J,M} \, | L_p (\rn) \|\le 2^{-J(\frac{n}{p} + \vr)}$. 
We define then $h_{J,M}\equiv\lambda_{J,M} a_{J,M}$ for $J\in \ganz$, $M\in \zn$ and obtain
\begin{align*}
  \sum_{J\in \ganz, M \in \zn} 2^{J(\frac{n}{p} + \vr)} \| h_{J,M} \, | L_p ( Q_{J,M})\|
	&\le \sum_{J\in \ganz, M \in \zn} |\lambda_{J,M}| \\&\le \left(1+\frac{\ve}{2}\right) \| h \, | H^{\vr} L_p (\rn) \|.
\end{align*}
Let
\[ 
h^L = \sum_{|J|\le L, |M| \le L} h_{J,M}, \qquad L \in \nat.
\] 
Then
\[ 
\| h - h^L \, | H^{\vr} L_p (\rn) \| \to 0 \qquad \text{if} \quad L \to \infty.
\]
Hence,
\begin{align*}
  \sum_{\substack{|J|\le L\\ |M|\le L}} 2^{J(\frac{n}{p} + \vr)} \| h_{J,M} \, | L_p ( Q_{J,M})\| &\le \left(1+\frac{\ve}{2}\right) \| h \, | H^{\vr} L_p (\rn) \| \\&\le (1+\ve) \| h^L \, | H^{\vr} L_p (\rn) \|.
\end{align*}
\end{proof} 

\subsection{Vector-valued Morrey spaces}

\begin{Def} \label{24:GG}
Let $1< p< \infty$, $-\frac{n}{p}\leq r<0$ and $1< q<\infty$. Let $L^r_{p}(\ell_q, \real^n)$ be the collection 
of all sequences of functions $f_j$ belonging to $L^r_{p}(\rn)$ such that
  \[
    \left\|f_j|{ L}^r_{p}(\ell_q, \real^n)\right\| \equiv 
		\left\|\left\{f_j\right\}|{ L}^r_{p}(\ell_q, \real^n)\right\| \equiv
		\left\| \left. \left(\sum_{j=0}^\infty \left|f_j(\cdot) \right|^q \right)^{\frac{1}{q}}\right|{ L}^r_{p}(\real^n) \right\|<\infty.  
  \]
Moreover,
   	\begin{align*}
		\stackrel{\circ}{{L}^r_{p}}(\ell_q, \real^n) \equiv &\left\{  \left. \left\{f_j\right\}_{j\in\nat_0} \in {L}^r_{p}(\ell_q, \real^n) \right| \text{ there exist } f_j^k\in D(\rn) \text{ for all } \right. \\ & \quad \left. j\in\nat_0, k\in\nat \text{ and } f_j^k=0 \text{ for } j>k \text{ with }
		\right. \\ & \quad \left.
		\left\|\left.\left\{f_j-f_j^k\right\}_{j}\right|{L}^r_{p}(\ell_q,\rn)\right\|\rightarrow 0 \quad (k\rightarrow\infty) \right\}. 
	\end{align*}
Furthermore, for $\alpha\in\real$ we define the space $L_{p}(\ell_{q}, w_{\alpha},\rn)$ as
$L^r_{p}(\ell_{q},\rn)$ using the norm of $\Lw$ instead the norm of $\Lr$. If $\alpha=0$, we simply write $L_{p}(\ell_{q},\rn)$.
\end{Def}
\begin{Def}
Then $H^{\vr} L_p (\ell_{q},\rn)$ denotes
the collection of all sequences of functions $g_j$ belonging to $H^{\vr} L_p (\rn)$ 
such that
$\left\|g_j(\cdot)|\ell_{q}\right\|$ is in $H^{\vr} L_p (\rn)$.
Moreover, $H^{\vr} L_p (\ell_{q},\rn)_F^\ve$ stands for  
the collection of all sequences of functions $g_j$ belonging to $H^{\vr} L_p (\ell_{q},\rn)$ 
such that
$\left\|g_j(\cdot)|\ell_{q}\right\|$ is in $H^{\vr} L_p (\rn)_F^\ve$.
\end{Def}

\section{Duals and preduals - the vector-valued case} 
\subsection{Predual spaces}
The duality with respect to Morrey spaces is discussed in the scalar case in detail with complete proofs in \cite{RoT14_2}. Here we give complete proofs in the vector-valued case following their approach.

\begin{Thm} \label{TDp1:GG}
Let $1<p<\infty$, $-\frac{n}{p}< r<0$, $r+\vr=-n$ and $1<q<\infty$. 
Then the predual space of $L_p^r(\ell_q,\rn)$ is $H^{\vr} L_{p'} (\ell_{q'},\rn)$. 
Moreover,
\[g\in \left(H^{\vr} L_{p'} (\ell_{q'},\rn)\right)'\] if, and only if, it can be uniquely represented as 
\begin{\eq} \label{87:GG}
  g(f)=\int_{\rn} \sum_{j\in\nat_0} g_j(x) f_j(x)\di x
\end{\eq}
for all $f\equiv\{f_j \}\in L_{p'} (\ell_{q'}, w_{\alpha},\rn)  \hra H^{\vr} L_{p'} (\ell_{q'},\rn)$, $\alpha>n/p$, where
\[
  \{g_j\}\in L_p^r(\ell_q,\rn) \text{ and } \left\|g\left|\left( \Hv \right)'\right.\right\|=\left\|g_j|L_p^r(\ell_q,\rn)\right\|.
\]
Moreover, if $\{g_j\}\in L_p^r(\ell_q,\rn)$, then
\begin{\eq} \label{1:GHM}
  \left\|g_j|L_p^r(\ell_q,\rn)\right\|=\sup_{f}\left|\int_{\rn}\sum_{j\in\nat_0} g_j(x) f_j(x)\di x\right| 
\end{\eq}
where the supremum is taken over all $f \equiv \{f_j\}\in H^{\vr} L_{p'} (\ell_{q'},\rn)$ with \\ $\left\|f|H^{\vr} L_{p'} (\ell_{q'},\rn)\right\|\leq 1$. 
\end{Thm}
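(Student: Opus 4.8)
The plan is to show that the canonical pairing \eqref{87:GG} realises an \emph{isometric} isomorphism between $L_p^r(\ell_q,\rn)$ and $\bigl(\Hv\bigr)'$; once this is done, the norm identity \eqref{1:GHM}, the representation on $L_{p'}(\ell_{q'},w_\alpha,\rn)$, and the uniqueness statement all follow. The argument runs parallel to the scalar case treated in \cite{RoT14_2}, the only genuinely new ingredient being the bookkeeping forced by the $\ell_q$-valued structure.

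First I would prove the elementary direction, $L_p^r(\ell_q,\rn)\hra\bigl(\Hv\bigr)'$ with operator norm at most $\|g_j|L_p^r(\ell_q,\rn)\|$. Given $\{g_j\}\in L_p^r(\ell_q,\rn)$ and $f=\{f_j\}\in\Hv$, put $F:=\|f_j(\cdot)|\ell_{q'}\|\in H^{\vr}L_{p'}(\rn)$ and choose a near-optimal scalar atomic decomposition $F=\sum_{J,M}\lambda_{J,M}a_{J,M}$ as in \eqref{2.9}--\eqref{2.10a}. Multiplying through by the unit ``direction'' $f_j/F$ (which is irrelevant where $F=0$, since there $f_j=0$ a.e.) turns this into a componentwise decomposition $f_j=\sum_{J,M}f_{J,M,j}$ with $\supp f_{J,M,j}\subset Q_{J,M}$ and $\|\,\|f_{J,M,j}(\cdot)|\ell_{q'}\|\,|L_{p'}(Q_{J,M})\|\le|\lambda_{J,M}|\,2^{-J(n/p'+\vr)}$. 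Estimating $\int_\rn\sum_j g_jf_j$ termwise in $(J,M)$, applying Hölder's inequality first in the $\ell_q$--$\ell_{q'}$ pairing and then in $L_p(Q_{J,M})$--$L_{p'}(Q_{J,M})$, and invoking the exponent identity $n/p+r+n/p'+\vr=0$ (so that $2^{-J(n/p'+\vr)}=2^{J(n/p+r)}$) together with $2^{J(n/p+r)}\|\,\|g_j(\cdot)|\ell_q\|\,|L_p(Q_{J,M})\|\le\|g_j|L_p^r(\ell_q,\rn)\|$, one obtains absolute convergence of the pairing and $\bigl|\int_\rn\sum_j g_jf_j\bigr|\le\|g_j|L_p^r(\ell_q,\rn)\|\sum_{J,M}|\lambda_{J,M}|$; taking the infimum over decompositions of $F$ concludes this direction and yields ``$\le$'' in \eqref{1:GHM}.

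Next I would show that every $g\in\bigl(\Hv\bigr)'$ is of the asserted form with $\|g_j|L_p^r(\ell_q,\rn)\|\le\|g|(\Hv)'\|$. Fix $Q_{J,M}$: a sequence supported in $Q_{J,M}$ with finite $L_{p'}(\ell_{q'})$-norm is, after division by $2^{J(n/p'+\vr)}$ times that norm, a vector atom, whence $g$ restricts to a bounded functional on the mixed-norm space $L_{p'}(\ell_{q'},Q_{J,M})$ of sequences of functions on $Q_{J,M}$, of norm $\le 2^{J(n/p'+\vr)}\|g|(\Hv)'\|$; by the classical mixed-norm duality $\bigl(L_{p'}(\ell_{q'},Q_{J,M})\bigr)'\cong L_p(\ell_q,Q_{J,M})$ ($1<p,q<\infty$) it is represented by some $\{g_j^{J,M}\}\in L_p(\ell_q,Q_{J,M})$ with $\|g_j^{J,M}|L_p(\ell_q,Q_{J,M})\|\le 2^{J(n/p'+\vr)}\|g|(\Hv)'\|$. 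Testing against sequences supported in an overlap $Q_{J,M}\cap Q_{J',M'}$ of positive measure forces $g_j^{J,M}=g_j^{J',M'}$ a.e.\ there, so the local pieces glue to one $\{g_j\}\in L_p^{\mathrm{loc}}(\ell_q,\rn)$, and multiplying the bound by $2^{J(n/p+r)}$ and using the exponent identity gives $2^{J(n/p+r)}\|\,\|g_j(\cdot)|\ell_q\|\,|L_p(Q_{J,M})\|\le\|g|(\Hv)'\|$ for every $J,M$, i.e.\ $\|g_j|L_p^r(\ell_q,\rn)\|\le\|g|(\Hv)'\|$ after passing between the cube-sup and ball-sup forms of the Morrey norm. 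Together with the first direction this is the isometry, hence also ``$\ge$'' in \eqref{1:GHM}.

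Finally, $g$ and the functional $f\mapsto\int_\rn\sum_jg_jf_j$ from the first step agree on every sequence supported in one cube, hence on their linear span, which contains all finite linear combinations of atoms and is therefore dense in $\Hv$ (cf.\ Proposition~\ref{PttL}); as both functionals are $\Hv$-continuous, they coincide, which gives the representation on all of $\Hv$ and makes the two bounds above equalities. Absolute convergence of \eqref{87:GG} for $f\in L_{p'}(\ell_{q'},w_\alpha,\rn)$, $\alpha>n/p$, follows componentwise from Hölder and the embeddings $L_p^r(\ell_q,\rn)\hra L_p(\ell_q,w_{-\beta},\rn)$ and $L_{p'}(\ell_{q'},w_\alpha,\rn)\hra\Hv$ of Remark~\ref{GF3} with a suitable $\beta\in(n/p,\alpha]$, and uniqueness is immediate since a sequence representing the zero functional annihilates every compactly supported $L_{p'}(\ell_{q'})$-sequence and hence vanishes a.e. I expect the main obstacle to lie not in the Hölder estimates but in the measurability and convergence bookkeeping around the vector-valued atomic decomposition of the first step and the gluing of the local representatives in the second (including the harmless switch between the two equivalent Morrey norms).
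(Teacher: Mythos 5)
Your argument is correct in substance and reaches the same isometry, but the route differs from the paper's in both directions, most notably in the converse one. For $L_p^r(\ell_q,\rn)\hra(\Hv)'$ the paper does not decompose the components $f_j$ at all: it applies H\"older in $\ell_q$--$\ell_{q'}$ first, decomposes only the scalar majorant $F=\|f_j(\cdot)|\ell_{q'}\|$ using the \emph{finite} near-optimal decompositions of Definition~\ref{GTTF}/Proposition~\ref{PttL}, and then passes to general $f\in\Hv$ by a Fatou argument along an a.e.\ convergent subsequence (extracted via $\Hr\hra L_u(\rn)$). Your ``unit direction'' factorisation $f_{J,M,j}=\lambda_{J,M}a_{J,M}f_j/F$ is a legitimate alternative packaging of the same H\"older estimate and buys you absolute termwise convergence over the full countable decomposition, at the price of the measurability/convergence bookkeeping you flag. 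The bigger divergence is in the converse direction: the paper obtains a \emph{global} representative in one stroke by restricting $g$ along the embedding $L_{p'}(\ell_{q'},w_\alpha,\rn)\hra\Hv$ of \eqref{74:GGE} and invoking duality of weighted mixed-norm Lebesgue spaces, so that $\{g_j\}\in L_p(\ell_q,w_{-\alpha},\rn)$ exists from the start and the cube-testing only serves to upgrade it to the Morrey bound; this also makes the representation on $L_{p'}(\ell_{q'},w_\alpha,\rn)$ in \eqref{87:GG} automatic. You instead build $\{g_j\}$ by local mixed-norm duality on each $Q_{J,M}$ and glue via consistency on overlaps, which avoids the weighted-space machinery entirely but obliges you to supply the gluing argument, the density of the span of single-cube-supported sequences in $\Hv$ (a vector-valued strengthening of Proposition~\ref{PttL} that you should state explicitly, since the proposition as written is scalar), and a separate H\"older/embedding argument for the convergence of \eqref{87:GG} on the weighted space. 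Both routes are sound; the paper's is shorter because the global representative comes for free, while yours is more self-contained in that it uses only unweighted local duality.
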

\begin{proof}
Let $g\equiv\{g_j\}\in L_p^r(\ell_q,\rn)$ and $\{\tilde{f_j}\}\in H^{\vr} L_{p'} (\ell_{q'},\rn)$ such that
$\left\|\left.\tilde{f_j}(\cdot)\right|\ell_{q'}\right\|$ is in $H^{\vr} L_{p'} (\rn)_F^\ve$. H\"older's inequality yields 
\begin{align*}
  &\int_{\rn} \sum_{j\in\nat_0} \left|g_j(y) \tilde{f_j}(y)\right|\di y
  \leq \int_{\rn} \left\|\{g_j(y)\}_j|\ell_q\right\| \left\|\{\tilde{f_j}(y)\}_j|\ell_{q'}\right\| \di y
	\\ \leq & \sum_{\substack{J\in \ganz, M\in \zn\\ |J|\le L, |M|\le L}} \int_{\rn} \left\|\{g_j(y)\}_j|\ell_q\right\| h_{J,M}(y) \di y
  \\ \leq & 
	\sum_{\substack{J\in \ganz, M\in \zn\\ |J|\le L, |M|\le L}} 2^{J(\frac{n}{p} + r)} \left\| \left.\left\|\{g_j(\cdot)\}_j|\ell_q\right\| \right| L_{p}(Q_{J,M}) \right\|
   2^{J(\frac{n}{p'} + \vr)} \left\| h_{J,M} | L_{p'}(Q_{J,M})\right\|
	\\ \leq & (1+\ve) \left\|g_j|L_p^r(\ell_q,\rn) \right\| \left\|\tilde{f_j}|H^{\vr} L_{p'} (\ell_{q'},\rn) \right\|
\end{align*}
where $\left\|\left.\tilde{f_j}(\cdot)\right|\ell_{q'}\right\|$ is represented as in Definition \ref{GTTF} and $r+\vr+n=0$.
Therefore the operator $T_{g}$ given by
\[
  T_{g}(\{\tilde{f_j}\})\equiv \int_{\rn} \sum_{j\in\nat_0} \left| g_j(y) \tilde{f_j}(y) \right| \di y
\]
is bounded on $H^{\vr} L_{p'} (\ell_{q'},\rn)_F^\ve$.
We get the (unique) continuous extension $T_{g}:H^{\vr} L_{p'} (\ell_{q'},\rn)\hra \real$ by means of Proposition \ref{PttL}, where this extension is justified as in the linear case cf. \eqref{ToY} and \eqref{FGM} below.
Let $\{f_j\}\in \Hv$. 
By Proposition \ref{PttL} there is furthermore a sequence $\{f^k_j\}$ of $H^{\vr} L_{p'} (\ell_{q'},\rn)_F^\ve$ such that $\{f^k_j\}$ tends to $\{f_j\}$ in $\Hv$ for $k\rightarrow\infty$. 
For $u$ such that $\vr u =-n$ by $\Hr\hra L_u(\rn)$ exists a subsequence such that $\left\|\left.f^{k_l}_j(\cdot)-f_j(\cdot)\right|\ell_{q'}\right\|\rightarrow 0$ almost everywhere with respect to the Lebesgue measure in $\rn$ for $l\rightarrow\infty$. This implies $f^{k_l}_j \rightarrow f_j$ almost everywhere for all $j\in\nat_0$ if $l\rightarrow\infty$.
The Lemma of Fatou yields then
\begin{align*}
&\int_{\rn} \sum_{j\in\nat_0} \left| g_j(y) f_j(y) \right| \di y
=  \int_{\rn} \sum_{j\in\nat_0} \left| g_j(y) \lim_{l\rightarrow\infty} f^{k_l}_j(y) \right| \di y \\ \le & 
\lim_{l\rightarrow\infty} T_{g}(\{{f^{k_l}_j}\}) =T_{g}(\{f_j\})
\end{align*}
Thus, for $\ve\searrow 0$ 
we obtain 
\begin{align} \begin{split} \label{58:GG}
  \left|\int_{\rn} \sum_{j\in\nat_0} g_j(y) f_j(y)\di y\right| 
	&\le\int_{\rn} \sum_{j\in\nat_0} \left|g_j(y) f_j(y)\right| \di y 
	\\&\le \left\|g_j|L_p^r(\ell_q,\rn) \right\| \left\|f_j|H^{\vr} L_{p'} (\ell_{q'},\rn) \right\|
\end{split} \end{align}
for $\{g_j\}\in L_p^r(\ell_q,\rn)$ and $\{f_j\}\in H^{\vr} L_{p'} (\ell_{q'},\rn)$. 
Hence, in particular, any $\{g_j\}\in L_p^r(\ell_q,\rn)$ induces a bounded linear functional on $H^{\vr} L_{p'} (\ell_{q'},\rn)$.
\par Conversely, suppose that $g$ is a bounded linear functional on \\$H^{\vr} L_{p'} (\ell_{q'},\rn)$ with the norm $\left\|g\right\|$. 
Taking into account \eqref{74:GGE} the linear functional $g$ induces a bounded linear functional on $L_{p'} (\ell_{q'}, w_{\alpha},\rn)$ for $\alpha>n/p$ and therefore we have the representation formula
\begin{\eq} \label{TTLCTN}
  g(\{f_j\})=\int_{\rn} \sum_j g_j(y) f_j(y) \di y 
\end{\eq}
for some $\{g_j\}\in L_{p}(\ell_{q},w_{-\alpha},\rn)$ and for all $\{f_j\}\in L_{p'}(\ell_{q'}, w_{\alpha},\rn)$. 
Let $\{\tilde{f_j}\}\in L_{p'}(\ell_{q'}, w_{\alpha},\rn)$ with $\supp \tilde{f_j} \subset Q_{J,M}$ for all $j\in\nat_0$. Then
\[
  \left\|\tilde{f_j}|H^{\vr} L_{p'} (\ell_{q'},\rn) \right\| \le 2^{J(\frac{n}{p'} +\vr)} 
	\left\| \left.\|\{\tilde{f_j}(\cdot)\}_j|\ell_{q'}\| \right| L_{p'}(Q_{J,M}) \right\|.
\]
With $\frac{n}{p'} +\vr = -\frac{n}{p} -r$ one obtains
\begin{align*} 
  |g(\{f_j\})| &\le \left\|g\right\| \left\|\left.\tilde{f_j} \right| H^{\vr} L_{p'} (\ell_{q'},\rn) \right\|
	\\&\le \left\|g\right\| 2^{-J(\frac{n}{p} +r)} 
	\left\| \left.\|\{\tilde{f_j}(\cdot)\}_j|\ell_{q'}\| \right| L_{p'}(Q_{J,M}) \right\|.
\end{align*} 
Then one has by duality in $L_{p'} (\ell_{q'},Q_{J,M})$ and \eqref{TTLCTN}
\[
\| g_j \, | L_{p} (\ell_q,Q_{J,M}) \| \le 2^{-J (\frac{n}{p'} +r)} \| g \|.
\]
Hereby, $L_{p'} (\ell_{q'},Q_{J,M})$ is defined similarly as
$L^r_{p}(\ell_{q'},\rn)$ using $L_{p'}(Q_{J,M})$ instead of $\Lr$. Note also that an element of $L_{p'} (\ell_{q'},Q_{J,M})$, say $\{\tilde{f_j}\}$, is also in $L_{p'}(\ell_{q'}, w_{\alpha},\rn)$ if one extends $\tilde{f_j}$, $j\in\nat_0$, outside of $Q_{J,M}$ by zero. The last inequality proves $\{g_j\}\in L_p^r(\ell_q,\rn) $ and
\[
  \left\|g_j|L_p^r(\ell_q,\rn) \right\| \le \| g \|.
\] 
%
%
\end{proof} 

\subsection{Dual spaces}
In the proof of the next theorem, which is a vector-valued extension of \cite[Thm. 4.1, (4.5)]{RoT14_2}, we benefit from the following general assertion.

\begin{Prop}[page 73 of \cite{ET96} and Lemma in Section 1.11.1 of \cite{T78}] \label{86:GG}
  Let $\{ A_j \}_{j\in\nat_0}$ be a sequence of complex Banach spaces and $\{A_j'\}_{j\in\nat_0}$ their respective duals. Moreover, we put
   	\begin{align*}
		c_0(\{A_j\}) &\equiv \left\{  \left. a\equiv\left\{a_j\right\}_{j\in\nat_0} \right| a_j\in A_j,  \right.\\ &\left. 
		\quad\left\|a|c_0(A_j)\right\|\equiv\left\|a|\ell_\infty(A_j)\right\|\equiv\sup_j\left\|a_j|A_j\right\|<\infty, \left\|a_j|A_j\right\|\rightarrow 0\right\},
		\\
		\ell_1(\{A_j'\}) &\equiv \left\{  \left. a'\equiv\left\{a_j'\right\}_{j\in\nat_0} \right| a_j'\in A_j', \left\|a'|\ell_1(A_j')\right\|\equiv\sum_j\left\|a_j|A_j'\right\|<\infty \right\}. 
	\end{align*}
Then
\begin{align*}
  &\left(c_0(\{A_j\})\right)'=\ell_1(\{A_j'\}) \text{ with } a'(a)= \sum_{j=0}^\infty a_j'(a_j) \text{ and }\\ &\left\|\cdot\left| (c_0(A_j))'\right. \right\| =\left\|\cdot \left| \ell_1(A_j') \right.\right\|.
\end{align*}
\end{Prop}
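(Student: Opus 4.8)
The plan is to construct the isometric isomorphism $\ell_1(\{A_j'\}) \to \left(c_0(\{A_j\})\right)'$ explicitly and verify it is well-defined, surjective, and norm-preserving. First I would check the easy direction: for $a' \equiv \{a_j'\} \in \ell_1(\{A_j'\})$ and $a \equiv \{a_j\} \in c_0(\{A_j\})$ the estimate $\sum_j |a_j'(a_j)| \le \left(\sup_j \|a_j|A_j\|\right)\sum_j \|a_j'|A_j'\|$ shows that $a \mapsto \sum_j a_j'(a_j)$ is a well-defined bounded functional of norm at most $\|a'|\ell_1(A_j')\|$. The key structural observation, used repeatedly, is that the finitely supported sequences are dense in $c_0(\{A_j\})$ — this is precisely where the defining decay condition $\|a_j|A_j\| \to 0$ enters — since for $a^{(N)}$ the truncation of $a$ to its first $N+1$ coordinates one has $\|a - a^{(N)}|c_0(A_j)\| = \sup_{j>N}\|a_j|A_j\| \to 0$.

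For the converse, given $\Phi \in \left(c_0(\{A_j\})\right)'$, let $\iota_j \colon A_j \to c_0(\{A_j\})$ be the isometric embedding placing its argument in coordinate $j$ and zero elsewhere, and set $a_j' := \Phi \circ \iota_j \in A_j'$, so $\|a_j'|A_j'\| \le \|\Phi\|$. Linearity of $\Phi$ on finitely supported sequences together with the density just noted gives $\Phi(a) = \lim_N \sum_{j=0}^N a_j'(a_j) = \sum_j a_j'(a_j)$ for every $a$. To see $a' \equiv \{a_j'\} \in \ell_1(\{A_j'\})$ with $\sum_j \|a_j'|A_j'\| \le \|\Phi\|$, fix $N$ and $\delta>0$, choose $x_j \in A_j$ with $\|x_j|A_j\|\le 1$ and a unimodular scalar $\zeta_j$ with $\zeta_j a_j'(x_j) = |a_j'(x_j)| \ge \|a_j'|A_j'\| - \delta 2^{-j}$, and evaluate $\Phi$ on $\sum_{j=0}^N \zeta_j \iota_j x_j$, an element of $c_0(\{A_j\})$ of norm $\le 1$; this yields $\sum_{j=0}^N \|a_j'|A_j'\| \le \|\Phi\| + 2\delta$, and letting $\delta \to 0$, $N \to \infty$ finishes it. Combined with the first paragraph this also gives the norm equality $\|\Phi\| = \|a'|\ell_1(A_j')\|$. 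Uniqueness of the representation is immediate: if $\sum_j a_j'(a_j) = 0$ for all $a$, then testing on $\iota_j x$ forces $a_j' = 0$ for every $j$.

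I do not expect a genuine obstacle here; the result is classical and indeed quoted from \cite{ET96} and \cite{T78}. The only points deserving attention are the unimodular phase factors $\zeta_j$, which are needed because the $A_j$ are complex Banach spaces, and the fact — which one must not take for granted — that density of the finitely supported sequences in $c_0(\{A_j\})$ hinges precisely on the coordinate-decay built into its definition, the feature distinguishing $c_0(\{A_j\})$ from $\ell_\infty(\{A_j\})$.
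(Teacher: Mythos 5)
Your proof is correct and complete: the easy direction via the $\ell_1$--$\ell_\infty$ pairing estimate, the density of finitely supported sequences in $c_0(\{A_j\})$ (correctly identified as the place where the decay condition is essential), the recovery of the coordinate functionals $a_j'=\Phi\circ\iota_j$, and the unimodular-phase test vectors giving $\sum_{j=0}^N\|a_j'|A_j'\|\le\|\Phi\|+2\delta$ are all sound, and together they yield the isometric identification. The paper itself offers no proof of this Proposition --- it is quoted verbatim from page 73 of \cite{ET96} and the Lemma in Section 1.11.1 of \cite{T78} --- so there is nothing to compare against beyond noting that your argument is the standard one underlying those references.
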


\begin{Thm} \label{TDp2:GG}
Let $1<p<\infty$, $-\frac{n}{p}< r<0$, $r+\vr=-n$ and $1<q<\infty$.
Then the dual space of $\Lciv$ is $\Hv$. 
Moreover,
$g\in \left(\Lciv\right)'$ if, and only if, it can be uniquely represented as 
\[
  g(f)=\int_{\rn} \sum_{j\in\nat_0} g_j(x) f_j(x)\di x
\]
for all $f\equiv\{f_j \} \in L_{-\frac{n}{r}}(\ell_q,\rn) \hra \Lciv$, where
\[
  \{g_j\}\in \Hv \text{ and } \left\|g\left|\left(\Lciv\right)'\right.\right\|=\left\|g_j|\Hv\right\|.
\]
Moreover, if $\{g_j\}\in \Hv$, then
\begin{\eq} \label{2:GHM}
  \left\|g_j|\Hv\right\|=\sup_{f}\left|\int_{\rn}\sum_{j\in\nat_0 } g_j(x) f_j(x)\di x\right| 
\end{\eq}
where the supremum is taken over all $f \equiv \{f_j\} \in \Lciv$ with \\$\left\|f\left| \Lv\right.\right\|\leq 1$. 
\end{Thm}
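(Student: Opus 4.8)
The plan is to mimic the proof of Theorem \ref{TDp1:GG}, but now realise $\Lciv$ as a closed subspace of a $c_0$-type sum so that Proposition \ref{86:GG} applies directly. First I would fix the cube decomposition $\rn=\bigcup_{J,M}Q_{J,M}$ (or rather the dyadic annuli/cubes indexed by $\ganz\times\zn$) and, for a sequence $\{f_j\}$, record the building blocks $2^{J(\frac np+r)}\|\,\|\{f_j(\cdot)\}_j|\ell_q\|\,|L_p(Q_{J,M})\|$. The map sending $\{f_j\}$ to this doubly-indexed family of $\ell_q$-valued $L_p(Q_{J,M})$-functions embeds $\Lv$ isometrically into $\ell_\infty$ of the spaces $A_{J,M}\equiv L_p(\ell_q,Q_{J,M})$, and the defining density condition in Definition \ref{24:GG} (the $f_j^k$ with $f_j^k=0$ for $j>k$, approximating in $\Lv$) shows that $\Lciv$ lands inside the $c_0$-part $c_0(\{A_{J,M}\})$: the tail blocks over $|J|,|M|$ large and over large $j$ go to zero. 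Conversely every element of $D(\rn)$-sequences is manifestly in that $c_0$ subspace, and since $\Lciv$ is by definition the closure of such, one gets $\Lciv\hookrightarrow c_0(\{A_{J,M}\})$ isometrically onto a closed subspace.

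Next I would identify the annihilator and invoke the standard fact that the dual of a closed subspace $Y\subset Z$ is $Z'/Y^\perp$; combined with Proposition \ref{86:GG}, which gives $(c_0(\{A_{J,M}\}))'=\ell_1(\{A_{J,M}'\})$ and $A_{J,M}'=L_{p'}(\ell_{q'},Q_{J,M})$ by the vector-valued $L_p$-duality, a functional on $\Lciv$ is represented by a family $\{g_{J,M}\}$ with $\sum_{J,M}2^{J(\frac n{p'}+\vr)}\|\,\|\{g_{J,M}(\cdot)\}|\ell_{q'}\|\,|L_{p'}(Q_{J,M})\|<\infty$ — which is exactly an atomic decomposition witnessing membership in $\Hv$ via Definition \ref{D2.3}, with the infimum of $\ell_1$-norms matching $\|g_j|\Hv\|$. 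One then needs to pass from this "supported on cubes" description back to a single sequence $\{g_j\}$ of genuine functions: this is done by gluing, i.e. $g_j\equiv\sum_{J,M}g_{J,M,j}$, where the unconditional convergence in some $L_u(\rn)$ (Remark \ref{GF3}) guarantees the sum makes sense and that the pairing $g(f)=\int_{\rn}\sum_j g_j f_j\,\di x$ holds first on cube-supported $\{f_j\}$, then on $D(\rn)$-sequences, then by density on all of $\Lciv$; the norm equality and the duality formula \eqref{2:GHM} follow by taking suprema exactly as in the proof of Theorem \ref{TDp1:GG} (inequality $\le$ from the pairing estimate there, inequality $\ge$ from choosing near-optimal cube-supported testers). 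Uniqueness of $\{g_j\}$ follows because two representations agreeing on all $D(\rn)$-sequences must agree on each cube, hence a.e.

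The main obstacle I anticipate is the bookkeeping in the direction "functional $\Rightarrow$ element of $\Hv$": the naive quotient representative $\{g_{J,M}\}$ coming from $\ell_1(\{A_{J,M}'\})$ is a family of functions each supported in its own cube, and to produce the honest sequence $\{g_j\}\in\Hv$ one has to (i) check that the series $\sum_{J,M}g_{J,M,j}$ converges in the right ambient space and defines an element whose $\Hv$-norm is controlled by $\|g\|$, and (ii) check that no cancellation between overlapping cubes $Q_{J,M}$ (they do overlap, since $Q_{J,M}=2^{-J}(M+[-1,1]^n)$) destroys the atomic estimate — here one uses bounded overlap of the $Q_{J,M}$ for fixed $J$ and the freedom in the $\Hv$-atomic norm to absorb the finite overlap constant, just as in \cite{RoT14_2}. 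A secondary technical point, already handled in the $H^{\vr} L_p(\rn)_F^\ve$ machinery and Proposition \ref{PttL}, is that the passage from cube-supported testers to all of $\Lciv$ must use the a.e.-convergence-plus-Fatou argument (via $\Hr\hookrightarrow L_u$) verbatim as in the proof of Theorem \ref{TDp1:GG}, so that step I would simply cite rather than redo.
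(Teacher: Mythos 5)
Your proposal follows essentially the same route as the paper: the isometric identification of $\Lciv$ with a closed subspace of $c_0$ of the cube-localized spaces $L_p(\ell_q,\mu_J,Q_{J,M})$, Proposition \ref{86:GG} plus Hahn--Banach (equivalently the quotient description of the subspace dual) to represent the functional by an $\ell_1$-family $\{g^j_{J,M}\}$, gluing these into an atomic decomposition in $\Hv$, and dominated convergence with the $L_u$-majorant to obtain the integral representation on $L_{-n/r}(\ell_q,\rn)$. The technical points you flag (convergence of the glued series, the reduction to \eqref{58:GG} for the easy direction) are exactly the ones the paper handles, so the argument is correct and not materially different.
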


\begin{proof}
It follows from \eqref{58:GG}  that any $\{g_j\}\in \Hv$ induces a bounded linear functional on $\Lciv$. 
\par Conversely, suppose $g$ is a bounded linear functional on $\Lciv$ with norm $\left\|g\right\|$. 
We observe that
\begin{align*}
    \left\|\{f_j\}|\Lv\right\|&= \sup_{J\in\ganz,M\in\ganz^n} \Big( \int_{Q_{J,M}} \left(\sum_{j\in\nat_0}|f_j(x)|^q\right)^\frac{p}{q} 2^{J(n +pr)} \, \di x \Big)^\frac{1}{p} \\&= \left\|f^j_{JM}|c_0\left(L_p(\ell_q,\mu_J,Q_{JM})\right)\right\|,
\end{align*}
  where $f^j_{JM}\equiv f_j \chi_{Q_{JM}}$, $\mu_J(\di x)\equiv 2^{J(n +pr)}$ and 
  \begin{align*}
    &\left\|f^j_{JM}|c_0\left(L_p(\ell_q,\mu_J,Q_{JM})\right)\right\|
		\\ &\equiv\sup_{J\in\ganz,M\in\ganz^n} \Big( \int_{Q_{JM}} \left(\sum_{j\in\nat_0}|f^j_{JM}(x)|^q\right)^\frac{p}{q} 2^{J(n +pr)} \, \di x \Big)^\frac{1}{p}. 
  \end{align*}
  	This shows 
	that $\Lciv$ is isomorphic to a closed subspace of \\$c_0\left(L_p(\ell_q,\mu_J,Q_{JM})\right)$ analogously to the scalar-valued case in \cite[(4.18)-(4.20)]{RoT14_2}.
	More precisely, we have a linear, surjective and isometric map $I:\{f_j\}\mapsto \{f^j_{JM}\}$ from $\Lciv$ onto the closed subspace $\{\{f^j_{JM}\} | \{f_j\}\in \Lciv \}$ of $c_0\left(L_p(\ell_q,\mu_J,Q_{JM})\right)$ and 
	\[
	  I \Lciv = \{\{f^j_{JM} \}| \{f_j\}\in \Lciv \} \hra c_0\left(L_p(\ell_q,\mu_J,Q_{JM})\right).
	\]
  Hahn-Banach's theorem yields
$g\in \left(\Lciv\right)'$ if, and only if, \\$g\in \left(c_0\left(L_p(\ell_q,\mu_J,Q_{JM})\right)\right)'$ and by Proposition \ref{86:GG} 
we have the representation
\begin{align} \label{TYL}
  g(\{f_j\})&=\sum_{J\in\ganz,M\in\ganz^n} \int_{Q_{JM}} \sum_{j\in\nat_0}  f_j(x) g^j_{JM}(x) 2^{J(n +pr)} \di x
\end{align}
 for any $\{f_j\}\in \Lciv$ with $\{ g^j_{JM} \}\in \ell_1\left(L_{p'}(\ell_{q'},\mu_J,Q_{JM})\right)$, where
 \begin{align*}
   &\left\|\{ g^j_{JM} \}|\ell_1\left(L_{p'}(\ell_{q'},\mu_J,Q_{JM})\right)\right\|
   \\=& 
   \sum_{J\in\ganz,M\in\ganz^n} \Big( \int_{Q_{JM}} \left(\sum_{j\in\nat_0}|g^j_{JM}(x)|^{q'}\right)^\frac{p'}{q'} 2^{J(n +pr)} \, \di x \Big)^\frac{1}{p'}.
 \end{align*}
Moreover, Hahn-Banach's theorem implies that 
\begin{align*} 
   &\left\|g\left|\left(\Lci\right)'\right.\right\|=\inf  \left\{ \left\|g^j_{JM}\left|\ell_1\left(L_{p'}(\ell_{q'},\mu_J,Q_{JM})\right)\right.\right\| \left| g(\{f_j\})= g^j_{JM} (\{f_j\})  \right.\right.\\& \qquad \left.\left. \text{ for all }\{f_j\}\in \Lciv  \text{ and } g^j_{JM}\in\ell_1\left(L_{p'}(\ell_{q'},\mu_J,Q_{JM})\right)  
   \right.\right\}.
\end{align*} 
Using Lebesgue's dominated convergence theorem we deduce from \eqref{TYL} (cf. \eqref{TYL2} for an integrable majorant) the representation 
\begin{align*} 
  g(\{f_j\})&= 
	\int_{\rn} \sum_{j\in\nat_0} f_j(x) \sum_{J\in\ganz,M\in\ganz^n} g^j_{JM}(x) \chi_{Q_{JM}}(x) 2^{J(n +pr)} \di x
\end{align*}
for $\{f_j\}\in L_{-\frac{n}{r}}(\ell_q,\rn)$.
Let $\ve>0$. For $h^j_{JM}\equiv g^j_{JM} \chi_{Q_{JM}} 2^{J(n +pr)}$ we obtain
\[
  \left\|g^j_{JM}|L_{p'}(\ell_{q'},\mu_J,Q_{JM}) \right\|=2^{-J\left(\frac{n}{p}+r\right)} \left\|h^j_{JM}|L_{p'}(\ell_{q'},\rn)\right\| \equiv \la_{JM}.
\]
Therefore $\{\la_{JM}\}_{J,M} \in \ell_1$ and for an appropriate choice of $g^j_{JM}$ we obtain also
$\left\|\la|\ell_1\right\| \le (1+\ve) \left\|g\right\|$.
For $a^j_{JM}$ given by $h^j_{JM}=\la_{JM} a^j_{JM}$ we have then 
$\left\|\left.a^j_{JM} \right|L_{p'}(\ell_{q'},\rn)\right\|\le 2^{J\left(\frac{n}{p}+r\right)}$ with $\supp\left(a^j_{JM}\right) \subset {Q_{JM}}$. Finally, it holds 
$\left\{\sum_{J\in\ganz,M\in\ganz^n} \la_{JM} a^j_{JM}\right\}_j\in \Hv$ and 
\[
\left\{\sum_{J\in\ganz,M\in\ganz^n} g^j_{JM} \chi_{Q_{JM}} 2^{J(n +pr)}\right\}_j\in \Hv.
\]
Indeed, we have 
\[
  \left(\sum_{j\in\nat_0} \left|\sum_{\substack{J\in \ganz, \\ M\in \zn}} \la_{JM} a^j_{JM}\right|^{q'} \right)^\frac{1}{q'} \le \sum_{\substack{J\in \ganz, \\ M\in \zn}} \la_{JM} \left(\sum_{j\in\nat_0} \left| a^j_{JM}\right|^{q'} \right)^\frac{1}{q'} \in \Hr
\]
using $b_{JM} \equiv \left(\sum_{j\in\nat_0} \left| a^j_{JM}\right|^{q'} \right)^\frac{1}{q'}$ with $\supp\left(b_{JM}\right) \subset {Q_{JM}}$ and \\$\left\|b_{JM}|L_{p'}(\rn)\right\|\le 2^{J\left(\frac{n}{p}+r\right)}=2^{-J\left(\frac{n}{p'}+\vr\right)}$. Finally, 
\begin{align*}
  \left\|\left. \left\{\sum_{J\in\ganz,M\in\ganz^n} g^j_{JM}(x) \chi_{Q_{JM}}(x) 2^{J(n +pr)} \right\}_j \right|\Hv\right\| &\le \left\|\la|\ell_1\right\| \\&\le (1+\ve) \left\|g\right\|.
\end{align*}
By the same argumentation we obtain also
\[
\left\{\sum_{J\in\ganz,M\in\ganz^n} \left|g^j_{JM} \chi_{Q_{JM}}\right| 2^{J(n +pr)}\right\}_j\in \Hv\hra 
L_{-\frac{n}{\vr}}(\ell_q,\rn).
\]
Together with $\{f_j\}\in L_{-\frac{n}{r}}(\ell_q,\rn)$ and H\"older's inequality 
\begin{\eq} \label{TYL2}
  \sum_{j\in\nat_0} |f_j| \sum_{J\in\ganz,M\in\ganz^n} \left|g^j_{JM} \chi_{Q_{JM}}\right| 2^{J(n +pr)}
\end{\eq}
is an integrable majorant.
Moreover, we observe $L_{-\frac{n}{r}}(\ell_q,\rn)\hra \Lciv$. Indeed, for $\{f_j\}\in L_{-\frac{n}{r}}(\ell_q,\rn)$ there is a sequence $\{f^k_j\}_j$ tending to $\{f_j\}$ in $L_{-\frac{n}{r}}(\ell_q,\rn)$ as 
$k\rightarrow\infty$ with $f_j^k\in D(\rn)$ and $f_j^k=0$ for $j>k$ (and $f_j^k \nearrow f_j$ as 
$k\rightarrow\infty$) which also implies $\{f^k_j\}_j\rightarrow\{f_j\}$ in $\Lciv$ as 
$k\rightarrow\infty$ by $L_{-\frac{n}{r}}(\rn)\hra\Lr$.
\end{proof}

\section{Mapping properties of operators}
\subsection{The main theorem}

Next we extend the approach developed in \cite{RT13} and \cite{RoT14_2} to a wider class of operators and to vector-valued spaces.


\begin{Prop} \label{84A:GG} 
  Let $1<p<\infty$, $-\frac{n}{p}< r<0$ and $1<q<\infty$. Then $\Lciv$ coincides with the completion of finite sequences of continuous compactly supported functions. More precisely, it holds
 	\begin{align*}
		\Lciv = &\left\{  \left. \left\{f_j\right\}_{j\in\nat_0} \in {L}^r_{p}(\ell_q, \real^n) \right| \text{ there exist } f_j^k\in C(\rn) \text{ compactly}  \right. \\ & \quad \left. \text{supported for all } j\in\nat_0, k\in\nat \text{ and } f_j^k=0 \text{ for } j>k \text{ with } 
		\right. \\ & \quad \left.
		\left\|\left.\left\{f_j-f_j^k\right\}_{j}\right|{L}^r_{p}(\ell_q,\rn)\right\|\rightarrow 0 \right\}
		\equiv \overline{C_0(\ell_q, \real^n)}^{\|\cdot| \Lr \|}. 
	\end{align*}
\end{Prop}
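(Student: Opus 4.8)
The plan is to prove both inclusions between $\Lciv$ and the space $\overline{C_0(\ell_q, \real^n)}^{\|\cdot| \Lr \|}$ appearing on the right. One direction, $\Lciv \subset \overline{C_0(\ell_q,\real^n)}$, is immediate: since every function in $D(\rn)$ is continuous with compact support, any approximating family $\{f_j^k\}$ admissible in the definition of $\Lciv$ is also admissible for the right-hand side. All the content lies in the reverse inclusion, which I would reduce to the claim that a finite sequence $\{f_j\}_j$ (meaning $f_j = 0$ for $j > N$) of continuous compactly supported functions can be approximated in the norm of $\Lv$ by finite sequences of functions from $D(\rn)$.

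The key tool is a mollification estimate for the scalar Morrey norm. Fix a standard mollifier $\phi \in D(\rn)$, $\phi \ge 0$, $\int \phi = 1$, $\supp \phi \subset B_1(0)$, put $\phi_\ve(\cdot) = \ve^{-n}\phi(\cdot/\ve)$, and for $g \in C(\rn)$ with compact support set $g_\ve = g * \phi_\ve$. Then $g_\ve \in D(\rn)$; for $\ve \le 1$ all the $g_\ve$, hence all differences $g - g_\ve$, are supported in one fixed compact set $K$; and uniform continuity of $g$ gives $\|g - g_\ve | L_\infty(\rn)\| \to 0$ as $\ve \to 0$. For any ball $B_R(x)$ one has
\[
  R^{-(\frac np + r)}\,\|g - g_\ve | L_p(B_R(x))\| \le \|g - g_\ve | L_\infty(\rn)\|\, R^{-(\frac np + r)}\min\bigl(c_n R^{\frac np},\, |K|^{\frac1p}\bigr),
\]
and one splits into the case $R \le 1$, where $R^{-(\frac np + r)}c_n R^{\frac np} = c_n R^{-r} \le c_n$ because $r < 0$, and the case $R > 1$, where $R^{-(\frac np + r)} \le 1$ because $\frac np + r > 0$. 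Taking the supremum over $x$ and $R$ yields $\|g - g_\ve | \Lr\| \le c\,\|g - g_\ve | L_\infty(\rn)\| \to 0$. The same estimate with $g$ replaced by $\chi_K$ shows $\|\chi_K | \Lr\| < \infty$ for every fixed compact $K$, a fact I use in the next step.

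For the vector-valued version, mollify componentwise, $f_j^\ve = f_j * \phi_\ve$, so that $\{f_j^\ve\}_j$ is again a finite sequence, now consisting of functions in $D(\rn)$. For $\ve \le 1$ all the $f_j - f_j^\ve$ are supported in a common compact set $K$, so pointwise
\[
  \Bigl(\sum_j |f_j - f_j^\ve|^q\Bigr)^{\frac1q} \le (N+1)^{\frac1q}\Bigl(\max_{j \le N}\|f_j - f_j^\ve | L_\infty(\rn)\|\Bigr)\chi_K ,
\]
and by monotonicity of the Morrey norm together with the previous step, $\|\{f_j - f_j^\ve\}_j | \Lv\| \le (N+1)^{\frac1q}\bigl(\max_{j\le N}\|f_j - f_j^\ve | L_\infty(\rn)\|\bigr)\,\|\chi_K | \Lr\| \to 0$.

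Finally I would upgrade this to arbitrary elements of the right-hand side by a diagonal argument: given such $\{f_j\}_j$ with approximating finite $C(\rn)$-sequences $\{f_j^k\}_j$, choose for each $k$ a finite $D(\rn)$-sequence within distance $1/k$ of $\{f_j^k\}_j$ by the previous paragraph, and relabel in $k$ — padding with zeros and passing to a subsequence — so that the truncation index can be taken equal to $k$; this produces a family admissible in the definition of $\Lciv$. I expect the only genuine obstacle to be the Morrey-norm mollification estimate, specifically getting both cases $R \le 1$ and $R > 1$ to close, which is precisely where the assumption $-\frac np < r < 0$ is used; the $\ell_q$ bookkeeping and the relabeling are routine.
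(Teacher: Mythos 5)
Your proposal is correct and follows essentially the same route as the paper: reduce to a finite sequence of continuous compactly supported functions, mollify componentwise, and control the Morrey norm by splitting the supremum over radii into small balls (where $r<0$ is used) and large balls (where $\frac{n}{p}+r>0$ and the fixed compact support are used). The only cosmetic difference is that you bound the Morrey norm of a compactly supported function by its sup-norm and invoke uniform convergence of $g*\phi_\ve\to g$, whereas the paper estimates the translation differences $g_j(\cdot-y)-g_j(\cdot)$ ball by ball and then applies Minkowski's inequality to the convolution; these are interchangeable, and your concluding diagonal step is routine (indeed the truncation index needs no relabeling, since mollifying the zero components keeps them zero).
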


\begin{proof}
  Let $\{f_j\}\in \overline{C_0(\ell_q, \real^n)}^{\|\cdot| \Lr \|}$. Let $\ve>0$. Then there exists a sequence $\{g_j\}$ with $g_j\in C(\rn)$ compactly supported with $g_j=0$ for $|j|>k$ and some $k\in\nat$ such that $\left\|f_j -g_j|\Lv\right\|<\ve$.
	Let $x\in\real^n$, $R>0$.   
   Let $\bar{R}>1$ such that $\supp \sum_{j=0}^k |g_j|^q \subset B_{\bar{R}-1}(0)$. Let $y\in\rn$ with $|y|<1$.
   Moreover, for $R\ge \bar{R}$
 	\begin{align*}
    \left\|g_j(\cdot-y)-g_j(\cdot)|L_p(\ell_q,B_R(x))\right\|\le \frac{\ve \bar{R}^r}{c} \left|B_{\bar{R}}(0)\right|^\frac{1}{p}\leq \ve R^{\frac{n}{p}+r}
  \end{align*}
	whenever
	\begin{\eq} \label{uc}
	  \sum_{j=0}^k |g_j(z-y)-g_j(z)| < \frac{\ve \bar{R}^r}{c} \qquad \text{ for all } z\in\rn
	\end{\eq}
	which holds by the uniform continuity of $g_j$, $j=0,\ldots,k$, for $|y|<\delta=\delta(\ve,\bar{R},r,g_0,\ldots,g_k)$
	where $c$ is a constant depending on $n$.
 	Furthermore, for $R< \bar{R}$ again by \eqref{uc}
 	\begin{align*}
    \left\|g_j(\cdot-y)-g_j(\cdot)|L_p(\ell_q,B_R(x))\right\|\le \frac{\ve \bar{R}^r}{c} \left|B_{R}(x)\right|^\frac{1}{p}\leq \ve R^{\frac{n}{p}+r}
  \end{align*} 	
\par
Let 
$\psi\in D(\rn)$ with $\supp \psi \subset B_1(0)$, $\int_{\rn} \psi (y)\di y=1$, $0\le \psi \le 1$ and $\psi_l(\cdot)\equiv l^n\psi(l\cdot)$, $l\in\nat$. Then it holds $\left\|\{g_j*\psi_l -g_j\}_j|\Lv\right\|<\ve$ for $l$ sufficient large 
where $g_j*\psi_l \in D(\rn)$, $j\in\nat_0$. Indeed, 
by means of Minkowski's inequality and the properties of $\psi_l$ we find
 	\begin{align*}
    &\left\|g_j*\psi_l-g_j|L_p(\ell_q,B_R(x))\right\|
    \\ \leq &  R^{\frac{n}{p}+r} \int_{|y|\le \frac{1}{l} }|\psi_l(y)|  R^{-\left(\frac{n}{p}+r\right)} \left\|g_j(\cdot-y)-g_j(\cdot)|L_p(\ell_q,B_R(x))\right\| \di y
    \le \ve R^{\frac{n}{p}+r} 
 	\end{align*}  
where $l$ is sufficiently large (depending on $\ve$).
\end{proof}

\begin{Rem}
  In the last Proposition we adapted the proof in  scalar-valued case $\Lr$ given in \cite[Proposition 3]{Zor86} to the vector-valued situation.
\end{Rem}

\begin{Thm} \label{vB:thm}
	Let $1<p<\infty$, $-\frac{n}{p}< r<0$, $r+\vr=-n$, $1<q<\infty$
	and let $\left\{T_j\right\}_{j\in\nat_0}$ be a sequence of operators  with the following properties:
\begin{enumerate}[label=(\roman{*}), ref=(\roman{*})]
	\item $T_j:D(\rn)\rightarrow C(\rn)$, $j\in\nat_0$, and $T_j$, $j\in\nat_0$, are 
	\begin{enumerate}
	\item either linear or 
	\item 	 
		\begin{align} \begin{split} \label{ToY}
	  &(T_j (f_1 +f_2))(y) \le (T_j f_1)(y)+(T_j f_2)(y),\\ &(T_j f)(y) = (T_j (-f))(y) ,\ T_j 0=0  \end{split}
	  	\end{align}
	  	for $f$, $f_1$, $f_2 \in D(\rn)$ and $ y\in\rn$;
	\end{enumerate}
	\item  
	 we have 
\begin{equation} \label{43a:GG}
  |(T_j f)(y)|\leq c_1 \int_{\real^n} \frac{|f(z)|}{|y-z|^n} dz 
\end{equation}
for all $f\in D(\rn)$ and all $y\notin \supp(f)$, where $c_1$ does not depend on $j\in\nat_0$, $f$ and $y$;
	\item \label{vb2c:GG} there is a constant $c_2$ such that 
		 \[
		 \left\|T_j f_j|L_{p}(\ell_q, \rn)\right\|
			\leq c_2 \left\|f_j|L_{p}(\ell_q, \rn)\right\|
	\] 
	for all $\{f_j\}_{j\in\nat_0}\subset D(\rn)$.
\end{enumerate}
Then, the following statements hold true.
\begin{enumerate} 
\item \label{60:GG}
There are unique continuous and bounded extensions $\widetilde{T_j}$ of $T_j$ to $\Lci$ for $j\in\nat_0$ such that 
\[
\left\{\widetilde{T_j}\right\}_{j\in\nat_0}: \Lciv\hra\Lciv .
\] 
\item \label{60b:GG}
If $T_j$ are linear for $j\in\nat_0$, then the dual operators of the unique linear and bounded extensions $\widetilde{T_j}$ of $T_j$ to $\Lci$, $\widetilde{T_j}': \Hr\hra\Hr$, 
satisfy \[\left\{\widetilde{T_j}'\right\}_{j\in\nat_0}: \Hv\hra\Hv.\]
If the extensions of $T_j$ to $L_p(\rn)$ due to assumption \ref{vb2c:GG} are formally self-adjoint for all $j\in\nat_0$, then $\widetilde{T_j}'$ are the unique linear and bounded extensions of $T_j$ acting in $\Hr$.
\item \label{59:GG}
If $T_j$ are linear for $j\in\nat_0$, then there are linear and bounded extensions $\widetilde{T_j}$ of $T_j$ to $\Lr$ such that 
\[ 
\left\{\widetilde{T_j}\right\}_{j\in\nat_0}: \Lv\hra \Lv .
\] 
\end{enumerate}
\end{Thm}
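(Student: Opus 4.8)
The plan is to establish part \ref{60:GG} first, since the other two parts follow by the duality machinery of Theorems \ref{TDp1:GG} and \ref{TDp2:GG}. For part \ref{60:GG}, the heart of the matter is a norm estimate on $\Lciv$ for sequences of \emph{compactly supported continuous} functions (which by Proposition \ref{84A:GG} form a dense subset), and the key is the splitting device of \cite{RT13,RoT14_2}. Fix $\{f_j\}\subset C_0(\rn)$ with $f_j=0$ for $j>k$, and fix a cube $Q_{J,M}$. Write $f_j = f_j^{\mathrm{in}} + f_j^{\mathrm{out}}$, where $f_j^{\mathrm{in}} = f_j \chi_{cQ_{J,M}}$ is the restriction to a fixed dilate $cQ_{J,M}$ and $f_j^{\mathrm{out}}$ is the rest; then use sublinearity (assumption (i)) to estimate $|T_j f_j| \le |T_j f_j^{\mathrm{in}}| + |T_j f_j^{\mathrm{out}}|$ pointwise on $Q_{J,M}$. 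For the ``inner'' part, apply the $L_p(\ell_q)$-bound \ref{vb2c:GG} on $\rn$ and observe that $\|f_j^{\mathrm{in}}\|$ is controlled by the local Morrey norm of $\{f_j\}$ on the slightly larger cube $cQ_{J,M}$, which is dominated by $\|\{f_j\}|\Lv\|$ after the correct power of $2^J$ is absorbed. For the ``outer'' part, on $Q_{J,M}$ the variable $y$ lies outside $\supp f_j^{\mathrm{out}}$, so \eqref{43a:GG} applies; estimate the kernel integral $\int |f_j^{\mathrm{out}}(z)|/|y-z|^n\,\di z$ by decomposing $\rn\setminus cQ_{J,M}$ into dyadic annuli around $Q_{J,M}$, on each of which $|y-z|^n$ is essentially constant, bounding the local integral of $|f_j^{\mathrm{out}}|$ (or of $\|\{f_j^{\mathrm{out}}(z)\}|\ell_q\|$ after Minkowski's inequality in $\ell_q$) by the Morrey norm on that annulus, and summing the resulting geometric series — this is where the condition $-\tfrac{n}{p}<r<0$ enters, guaranteeing convergence of $\sum_{\nu} 2^{\nu(n/p+r)}$ and $\sum_{\nu} 2^{-\nu(\ldots)}$ on the two sides.

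Having obtained $\|\{T_j f_j\}|\Lv\| \le c\,\|\{f_j\}|\Lv\|$ for compactly supported continuous sequences, I would next check that the extension is \emph{well-defined} and maps into $\Lciv$ (not merely $\Lv$). For each fixed $j$, the scalar estimate gives a unique bounded extension $\widetilde{T_j}:\Lci\to\Lci$ — uniqueness because $D(\rn)$ is dense in $\Lci$, and the target is $\Lci$ because $T_j$ maps $D(\rn)$ into $C(\rn)$ and one approximates; here one must be careful, as emphasized in the introduction, that the extension is genuinely compatible with the one on $L_p(\rn)$ on the overlap, but this is automatic on $D(\rn)$. The vector-valued bound then shows $\{\widetilde{T_j}\}:\Lciv\to\Lciv$: given $\{f_j\}\in\Lciv$, approximate by finite continuous compactly supported sequences via Proposition \ref{84A:GG}, apply the estimate, and pass to the limit, noting that the limit lies in $\Lciv$ because each $\widetilde{T_j}f_j^k$ is continuous compactly supported (as $T_j:D\to C$, and one may take the $f_j^k$ themselves compactly supported).

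For part \ref{60b:GG}, assume the $T_j$ linear. The extensions $\widetilde{T_j}:\Lci\to\Lci$ have well-defined duals $\widetilde{T_j}':(\Lci)'\to(\Lci)'$, and $(\Lci)'=\Hr$ by the scalar case of Theorem \ref{TDp2:GG}. To see $\{\widetilde{T_j}'\}:\Hv\to\Hv$, use the characterization \eqref{2:GHM}: for $\{g_j\}\in\Hv$,
\[
\bigl\|\{\widetilde{T_j}'g_j\}|\Hv\bigr\| = \sup_{\|f|\Lv\|\le 1,\ \{f_j\}\in\Lciv} \Bigl|\int_{\rn}\sum_j (\widetilde{T_j}'g_j)(x) f_j(x)\,\di x\Bigr| = \sup \Bigl|\int_{\rn}\sum_j g_j(x)(\widetilde{T_j}f_j)(x)\,\di x\Bigr|,
\]
and the latter is $\le \|\{g_j\}|\Hv\| \cdot \sup\|\{\widetilde{T_j}f_j\}|\Lv\| \le c\,\|\{g_j\}|\Hv\|$ by \eqref{58:GG} and part \ref{60:GG}. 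The self-adjointness addendum is the observation that under formal self-adjointness $\widetilde{T_j}'$ agrees with the $L_p$-extension of $T_j$ on $D(\rn)$, hence is \emph{an} extension acting in $\Hr$, and uniqueness follows from density of $D(\rn)$ in $\Hr$ (Remark \ref{GF3}). Part \ref{59:GG} is then immediate by one more dualization: $(\Hv)' \cong \Lv$ by Theorem \ref{TDp1:GG}, and the bidual operators $\widetilde{T_j}'' = (\widetilde{T_j}')'$ restrict to bounded operators on $\Lv$ extending $T_j$, since $D(\rn)\hra\Lci$ sits weak-$*$-densely and the action on $D(\rn)$ is unchanged.

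The main obstacle is the outer-part kernel estimate in part \ref{60:GG}: one must route the pointwise bound \eqref{43a:GG} through Minkowski's integral inequality in $\ell_q$ to convert $\{T_j f_j^{\mathrm{out}}\}$ into a single integral against $\|\{f_j^{\mathrm{out}}(z)\}|\ell_q\|$, then split into dyadic shells, bound each shell by the Morrey norm, and sum — keeping all constants uniform in $j$, $J$, $M$ and confirming the geometric series converge precisely under $-n/p<r<0$. Everything else is either a density/limit argument or a bookkeeping exercise with the dualities already proved.
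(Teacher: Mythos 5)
Your overall strategy --- the inner/outer splitting with assumption (iv) for the inner part and \eqref{43a:GG} plus dyadic annuli for the outer part, followed by duality via Theorems \ref{TDp1:GG} and \ref{TDp2:GG} for parts 2 and 3 --- is exactly the route the paper takes, and both the norm estimate on $\Lv$ and the duality bookkeeping (including the self-adjointness addendum and the bidual step) are set up correctly.

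There is, however, one genuine gap, and it sits at the most delicate point of the theorem, namely why the extension maps into $\Lciv$ and not merely into $\Lv$. You justify this by asserting that each $\widetilde{T_j}f_j^k$ is ``continuous compactly supported (as $T_j:D\to C$, and one may take the $f_j^k$ themselves compactly supported).'' This is false: the hypotheses force $T_jf$ to be continuous for $f\in D(\rn)$, but not compactly supported --- the Hilbert transform of a bump function already decays only like $|x|^{-1}$ and has full support. What is true, and what the paper proves in its Step 2, is that \eqref{43a:GG} yields the decay $\bigl(\sum_{j\le k}|(T_jf_j)(x)|^q\bigr)^{1/q}\le c\,|x|^{-n}$ for $|x|\ge 2\bar R$ when all $f_j$ are supported in $B_{\bar R}(0)$; one must then verify, treating $J\ge 0$ and $J<0$ separately and using $0<\frac{n}{p}+r<\frac{n}{p}$, that the Morrey norm of this tail over $\{|x|>R\}$ tends to $0$ as $R\to\infty$, so that $\psi_R\,T_jf_j$ with a smooth cut-off $\psi_R$ approximates $\{T_jf_j\}$ in $\Lv$; only then does Proposition \ref{84A:GG} apply to conclude $\{T_jf_j\}\in\Lciv$. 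Since the decay exponent $-n$ is exactly borderline, this computation cannot be waved away. A minor further point: the main estimate should be run on sequences from $D(\rn)$ (dense in $\Lciv$ by definition) rather than on $C_0(\rn)$ sequences, since hypotheses (ii)--(iv) are stated only for test functions; Proposition \ref{84A:GG} is needed in the opposite direction, namely to recognize the truncated images as elements of $\Lciv$.
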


\begin{proof}
\textit{Step 1}.
We start showing Assertion \ref{60:GG}.

At first we will show that $\left\{T_j\right\}_{j\in\nat_0}: \Lciv\hra\Lv$. 
			Let $\{f_j\}_{j=0}^\infty\in \Lciv$ with $f_j\in D(\rn)$ for all $j$. 
	 Let $x\in\real^n$ and $R>0$.
	We decompose 
	\begin{equation*} 
		{f_j}=f_j^0+\sum_{i=1}^\infty{f_j^i},
	\end{equation*}
		where $f_j^0\equiv\varphi_0 f_j$ and $f_j^i\equiv\varphi_i f_j$ for $i,j\in \nat$ with $\{\varphi_i\}_{i\in\nat_0}\subset D(\rn)$ such that
	\[
	  \varphi_0= 1 \text{ on } B_{2R}(x), \qquad   \supp\varphi_0\subset  B_{4R}(x)
	\]
	and 
	\[
	  \supp\varphi_i\subset B_{2^{i+2}R}(x)\setminus{B_{2^{i}R}(x)}, \qquad \sum_{i\in\nat_0}\varphi_i=1. 
	\]	
 	By means of \ref{vb2c:GG} we 	obtain 
	\begin{equation*}
		\begin{split}
			& \left(\int_{B_R(x)}{\left(\sum_{j=0}^\infty\left|T_j {f}_j^0(y)\right|^q\right)^\frac{p}{q} \di y}\right)^\frac{1}{p} 
		\leq c {R}^{n\left(\frac{1}{p}+\frac{r}{n}\right)}
		\left\|f_j|{L}^r_{p}(\ell_q, \rn)\right\| .
		\end{split}
	\end{equation*}	
Let $i \in\nat$ and $y\in B_R(x)$. It follows from \eqref{43a:GG} that 
	\begin{equation*} 
		\begin{split}
			\left(\sum_{j=0}^\infty\left|T_j {f}_j^i(y)\right|^q\right)^\frac{1}{q} 
			\leq & c {(2^{i-1} R)}^{-n} \int_{\real^n}\left(\sum_{j=0}^\infty{\left|{f}_j^i(z)\right|^q}\right)^\frac{1}{q} \di z.
		\end{split}
	\end{equation*}	
	H\"older's inequality yields
	\begin{align*} 
& \left(\int_{B_{R}(x)}{\left(\sum_{j=0}^{\infty}\left|T_j \left( \sum_{i=1}^{\infty}f_j^i\right)(y)\right|^q\right)^\frac{p}{q} \di y}\right)^\frac{1}{p}
\\\leq & c \sum_{i=1}^\infty {(2^{i-1} R)}^{-n} \int_{\real^n}\left(\sum_{j=0}^\infty{\left|{f}_j^i(z)\right|^q}\right)^\frac{1}{q} \di z \ |B_{R}(x)|^\frac{1}{p}
\\\leq & c' \sum_{i=1}^\infty {(2^{i-1} R)}^{-n} {R}^\frac{n}{p} ({2^{i+2} R})^{n(1-\frac{1}{p})} 
({2^{i+2} R})^{\left(\frac{n}{p}+r\right)}
		\left\|f_j|{L}^r_{p}(\ell_q, \rn)\right\|
\\\leq & c'' {R}^{\left(\frac{n}{p}+r\right)} \left\|f_j|{L}^r_{p}(\ell_q, \rn)\right\|.
	\end{align*}	
	By subadditivity of the operators we obtain 
	\[
	  \left\|T_j f_j|\Lv\right\|\le c\left\|f_j|\Lv\right\|
	\]
	where $c$ does not depend on $\{f_j\}_j$.
	We get the unique continuous extension $T:\Lciv\hra\Lv$ of $\{T_j \}_j$
	whenever $T_j$ are linear. 
	If $T_j$ fulfills \eqref{ToY}, then we have for $f_1$, $f_2\in D(\rn)$, $y\in\rn$, $j\in\nat_0$
	\[
	  |(T_j f_1)(y)-(T_j f_2)(y)|\le (T_j (f_1-f_2))(y) 
	\]
	and hence for $\{f_j\}$, $\{\widetilde{f_j}\} \in\Lciv$ with $f_j$, $\widetilde{f_j}\in D(\rn)$ for all $j\in\nat_0$
	\begin{align} \begin{split}\label{FGM}
	  \left\|T_j f_j-T_j \widetilde{f_j}|\Lv\right\|&\le \left\|T_j (f_j-\widetilde{f_j})|\Lv\right\|\\&\le c \left\|f_j-\widetilde{f_j}|\Lv\right\|.
	\end{split}\end{align}
	Therefore, $\{T_j \}_j$ is (Lipschitz-)continuous and moreover we get the unique continuous extension $T:\Lciv\hra\Lv$ of $\{T_j \}_j$ using \eqref{FGM} in the same way as in the linear case.\\

\textit{Step 2}.
It remains to justify that  also $T: \Lciv\hra\Lciv$. 
 By means of a density argument we may assume that $\{f_j\}_j\in \Lciv$	with $f_j\in D(\rn)$ for all $j\in\nat_0$ and a $k$ such that $f_j=0$ for $|j|>k$. 
	There is an $\bar{R}$ such that $\supp f_j \subset B_{\bar{R}}(0)$ for all $j\in\nat_0$. Then 
	\begin{\eq} \label{43a_2:GG}
\left(\sum_{j=0}^{k}|(T_j f_j)(x)|^q\right)^\frac{1}{q} \le c \, |x|^{-n} \qquad \text{if} \quad |x| \ge 2 \bar{R}
\end{\eq}
  using (\ref{43a:GG}) and triangle inequality. Here the constant $c$ depends on $\{f_j\}_j$. 
	Let $R\ge 2 \bar{R}$. Then one has for cubes $Q_{JM}$ with $Q_{JM} \subset \{ x\in \rn: \, |x| >R \}$,
\begin{equation*}  
2^{J(\frac{n}{p} +r)} \left\|\left. \left(\sum_{j=0}^{k}|(T_j f_j)(x)|^q\right)^\frac{1}{q} \, \right| L_p (Q_{JM}) \right\| \le c \, 2^{Jr} \, R^{-n} \qquad \text{if} \quad J \in \nat_0.
\end{equation*}
Using in addition $1<p<\infty$ 
we obtain
\[ 
2^{J(\frac{n}{p} +r)} \left\|\left. \left(\sum_{j=0}^{k}|(T_j f_j)(x)|^q\right)^\frac{1}{q} \, \right| L_p (Q_{JM}) \right\| \le c \, 2^{J(\frac{n}{p} +r)} \, R^{-n(1- \frac{1}{p})} \text{ if } -J \in \nat.
\] 
Let $\psi_R\in D(\rn)$ be a smooth cut-off function with $\psi_R (x) =1$ if $|x| \le R$. Then $\psi_R \, T_j f_j \in C(\rn)$ compactly supported for $0\le j\le k$ (by triangle inequality using $T_j:D(\rn)\rightarrow L_\infty(\rn)$) and it follows 
\[
\lim_{R \to \infty} \| \{ T_j f_j - \psi_R T_j f_j \}_j | \Lv \| =0
\]
with $1<p<\infty$ and $0 < \frac{n}{p} +r <\frac{n}{p}$. Here one should mention 
that cubes which are not completely inside of $\{ x\in \rn: \, |x| >R \}$ are treated analogously and that  $T_j f_j=0$ for $j>k$ by $T_j 0=0$.
Hence $\{T_j f_j\}\in \Lciv$ by Proposition \ref{84A:GG} and therefore $T: \Lciv\hra\Lciv$ by the unique extension of $\{T_j\}_j$ to $\Lciv$.
Futhermore, we observe that the projection of $T$ to its $k-th$ component ($k\in\nat_0$) coincides with $\widetilde{T_k}$ where $\widetilde{T_k}:\Lci\hra\Lci$ is the unique continuous and bounded extension of $T_k$. This yields Assertion \ref{60:GG}.
\\\textit{Step 3}.
Finally, Assertions \ref{60b:GG} and \ref{59:GG} follow by duality (Theorems \ref{TDp1:GG} and \ref{TDp2:GG}).
As for the abstract background of duality one may consult \cite[pp. 112/113]{Yos80} and \cite[pp. 35/36]{Pie07}. 
We get firstly $\widetilde{T_j}':\Hr\hra\Hr$ and $\left(\widetilde{T_j}'\right)':\Lr\hra\Lr$ for all $j\in\nat_0$. Moreover, by the linearity of $T= \{\widetilde{T_j}\}_{j\in\nat_0}:\Lciv\hra \Lciv$ duality also implies  
$T': \Hv\hra\Hv$ as well as 
$(T')': \Lv\hra\Lv$.
The projection of $T'$ to its $k-th$ component ($k\in\nat_0$) coincides with $\widetilde{T_k}'$.
Indeed, let $f_j\equiv g_j\equiv 0$ for all $j\neq k$ and let $f_k, g_k\in D(\rn)$. By means of the  definition of $T'$ and $\widetilde{T_k}'$ we have
\begin{align*}
  &\int_{\rn} f_k(x)(T'(\{g_j\}))_k(x)\di x=
	\left\langle \{f_j\}, T'(\{g_j\})\right\rangle_{\left(\Lciv,\Hv\right)}
	\\=&\left\langle T(\{f_j\}), \{g_j\}\right\rangle_{\left(\Lciv,\Hv\right)}
	\\=& \left\langle \{\widetilde{T_j} f_j\},\{g_j\}\right\rangle_{\left(L_{p}(\ell_{q},w_{\alpha},\rn),L_{p'}(\ell_{q'},w_{-\alpha},\rn)\right)}
	=\int_{\rn} (\widetilde{T_k} f_k)(x)g_k(x)\di x
	\\=& \left\langle \widetilde{T_k} f_k,g_k\right\rangle_{\left(L_{p}(w_{\alpha},\rn),L_{p'}(w_{-\alpha},\rn)\right)}
	= \left\langle \widetilde{T_k} f_k,g_k\right\rangle_{\left(\Lci,\Hr\right)}
	\\=& \left\langle f_k, \widetilde{T_k}'g_k\right\rangle_{\left(\Lci,\Hr\right)}
	=\int_{\rn} f_k(x) (\widetilde{T_k}'g_k)(x)\di x
\end{align*} 
for $\alpha<-n/p$.
Analogously, using $\Hr\hra L_{-n/\vr}(\rn)$ we deduce that the projection of $(T')'$ to its $k-th$ component ($k\in\nat_0$) coincides with $\left(\widetilde{T_k}'\right)'$ on $D(\rn)$.
Moreover, we assume that the extensions of $T_j$ to $L_p(\rn)$ due to assumption \ref{vb2c:GG} are formally self-adjoint for all $j\in\nat_0$. Then we obtain   
\begin{align} \begin{split}\label{TG1}
&\left\langle f, \widetilde{T_j}'g\right\rangle_{\left(\Lci,\Hr\right)}=
\left\langle \widetilde{T_j} f,g\right\rangle_{\left(\Lci,\Hr\right)}
\\=& \left\langle {T_j} f,g\right\rangle_{\left(\Lci,\Hr\right)}
= \left\langle T_j f,g\right\rangle_{\left(\Lw,L_{p'}(w_{-\alpha},\rn)\right)} 
\\=&\int_{\rn} (T_j f)(x)g(x)\di x=
\left\langle T_j f,g\right\rangle_{\left(L_p,L_{p'}\right)}
  =\left\langle f,T_j g\right\rangle_{\left(L_p,L_{p'}\right)} 
\end{split}\end{align}
for all $f,g\in D(\rn)$ and $\alpha<-n/p$. Therefore, $\widetilde{T_j}'g= T_j g$ almost everywhere for all $g\in D(\rn)$ and $j\in\nat_0$ which means that $\widetilde{T_j}'$ are extensions of $T_j$ to $\Hr$.
Moreover, the biduals $\widetilde{T_j}'' = \left(\widetilde{T_j}'\right)'$, $j\in\nat_0$, 
are extensions of $T_j$ to $\Lr$ by 
\begin{align*}
&\left\langle g, \widetilde{T_j}''f\right\rangle_{\left(\Hr, \Lr\right)}
=\left\langle\widetilde{T_j}'g,f\right\rangle_{\left(\Hr, \Lr\right)}
=\left\langle\widetilde{T_j}'g,f\right\rangle_{\left(L_{u},L_{u'}\right)}
\\=&\int_{\rn} (\widetilde{T_j}' g)(x)f(x)\di x
=\left\langle f, \widetilde{T_j}'g\right\rangle_{(\Lci,\Hr)}
\\=&\left\langle \widetilde{T_j} f,g\right\rangle_{\left(\Lci,\Hr\right)}
=\left\langle {T_j} f,g\right\rangle_{\left(\Lci,\Hr\right)}
=  \left\langle T_j f,g\right\rangle_{\left(L_p,L_{p'}\right)}
\end{align*} 
for all $f,g\in D(\rn)$, $u=-{n}/{\vr}$ and $j\in\nat_0$. Therefore, $\widetilde{T_j}'' = T_j$ on $D(\rn)$ for $j\in\nat_0$. 
\end{proof}

\begin{Rem} \label{62:GG}
The extension in Part 3 of the theorem is not unique. There exist infinitely many extensions of $T_j$ acting in $L^r_p(\rn)$. This can be seen following  the same arguments as in \cite[Remark 5.3]{RoT14_2}.
Assumption \ref{vb2c:GG} can be replaced by 
\[
  T:L_{p}(\rn)\hra L_{p}(\rn)
\]
if $T_j=T$ for all $j$, $T$ is linear and if $q$ is between $2$ and $p$ (including $2$ and $p$) 
which holds by the fact that the $L_p$-boundedness of a linear operator implies the $L_{p}(\ell_q,\rn)$-boundedness for $q\in [p,2]$ for $p\le 2$ and $q\in [2,p]$ for $p>2$ cf. \cite[Corollary 4.5.4]{G04}.
\end{Rem}

\begin{Rem} \label{LS}
There are a lot of papers dealing with singular integrals in Morrey spaces. However, its well-definedness on the Morrey-type spaces under consideration as well as the norm estimates in these spaces have to be treated with greater care than usually done. On the one-hand one has to investigate how to extend singular integrals to Morrey spaces and on the other hand the estimates \eqref{43a:GG} are not available in general for functions belonging to Morrey spaces. Let us emphasize that we used \eqref{43a:GG} just for functions of $D(\rn)$.
The question if the estimate \eqref{43a:GG} holds for some singular integrals also for all $f\in \Lr$ leads to an investiation of its maximal truncated versions (cf. \cite[Prop. 2.25, Rem. 2.26]{T-HS} as well as Sections \ref{TGIS} and \ref{TGIS2}).
Indeed, for these reasons in many papers one can only find the weaker mapping property $T:\Lci\hra\Lr$ (see, for example \cite{FR93, N94}, and \cite{Y98, G11, Mu12} for operators satisfying \eqref{43a:GG}). 
In this sense our results on $\Lci$ and $\Lciv$ are new (including even the boundedness of the Hardy-Littlewood maximal operator), in particular with respect to their generality. Note that results for Calder\'{o}n-Zygmund operators in $\Lci$ (scalar case) have been proved already in \cite{RT13} and \cite{RoT14_2}. Moreover, some results in $\Hr$ and in $\Hv$ and $\Lv$ seem to be new.
The paper \cite{AX12} made the important observation that the bidual of the completion of $D(\rn)$ with respect to the Morrey norm coincides with the Morrey space itself (cf. \eqref{GiF1}) and provided the basis of our investigations. 
To overcome the above mentioned problems investigating Calder\'{o}n-Zygmund operators in $L_p^r(\rn)$ they considered Muckenhoupt weighted characterizations of Morrey spaces and their preduals. However, this approach has also some weak points with respect to norm estimates since it does not take into account that the operator norm of classical operators of harmonic analysis (as the Hilbert transform) in Muckenhoupt weighted spaces usually depends on the Muckenhoupt weight. 
\par
We want to refer also to a less known forerunner result which can be found in \cite{Alv96}. There a solution for the above mentioned difficulties has been given for some Calder\'{o}n-Zygmund operators in $H^{\vr}L_p (\rn)$.
\end{Rem}

\subsection{Calder\'{o}n-Zygmund operators}
\subsubsection{Duality approach}

\begin{Def} \label{LS5}
We define \textit{Calder\'{o}n-Zygmund operators} with homogeneous kernels with degree $-n$ setting, 
\begin{align*}
  (T^\Omega f)(y)&\equiv \text{ p.v. } \int_{\real^n} \frac{\Omega(z/|z|)}{|z|^n}f(y-z)\di z ,
	\end{align*}
where $f\in S(\rn)$ and $\Omega \in L_\infty({\cal S}^{n-1})$ with zero integral and ${\cal S}^{n-1}$ denotes the unit sphere.
\end{Def}

\begin{Corol} \label{vB:corol} 
Let $1<p<\infty$, $-\frac{n}{p}\leq r<0$, $-n < \vr < -\frac{n}{p'}$, $1< q<\infty$.
Then the following statements hold true. 
\begin{enumerate} \item
There are unique linear and bounded extensions of $T^\Omega$ to $\Lci$ and to $\Hr$ denoted again by $T^\Omega$ such that 
\begin{align*} 
&\left\{T^\Omega\right\}_{j\in\nat_0}: \Lciv\hra\Lciv \quad\text{and}\quad \\&
\left\{T^\Omega\right\}_{j\in\nat_0}: \Hv\hra\Hv.
\end{align*}
\item
There are infinitely many linear and bounded extensions of $T^\Omega$ to $\Lr$ denoted again by $T^\Omega$ such that
\[ \left\{T^\Omega\right\}_{j\in\nat_0}: \Lv\hra \Lv.\] 
\end{enumerate}
\end{Corol}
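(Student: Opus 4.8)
\emph{The plan} is to realise $T^\Omega$ as the common entry of the constant sequence $T_j\equiv T^\Omega$, $j\in\nat_0$, to check that it meets hypotheses (i)--(iii) of Theorem \ref{vB:thm}, and then to read the three assertions of the corollary off Assertions \ref{60:GG}, \ref{60b:GG} and \ref{59:GG}. Hypothesis (i) is immediate: $T^\Omega$ is linear, and for $f\in D(\rn)\subset S(\rn)$ one has $\widehat{T^\Omega f}=m\,\hat f$ with $m$ bounded (homogeneous of degree $0$) and $\hat f\in S(\rn)$, so $m\hat f\in L_1(\rn)$ and hence $T^\Omega f\in C(\rn)\cap L_\infty(\rn)$. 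For hypothesis (ii) fix $f\in D(\rn)$ and $y\notin\supp(f)$: the only singularity $z=0$ of the kernel corresponds to the point $y\notin\supp(f)$, so the principal value in $(T^\Omega f)(y)$ reduces to an absolutely convergent integral, and bounding $|\Omega(z/|z|)|$ by $\|\Omega\,|L_\infty({\cal S}^{n-1})\|$ and substituting $w=y-z$ gives $|(T^\Omega f)(y)|\le\|\Omega\,|L_\infty({\cal S}^{n-1})\|\int_{\rn}\frac{|f(w)|}{|y-w|^{n}}\,\di w$, i.e.\ \eqref{43a:GG} with $c_1=\|\Omega\,|L_\infty({\cal S}^{n-1})\|$, trivially uniform in $j$.

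Hypothesis (iii), the vector-valued estimate $\|\{T^\Omega f_j\}\,|L_p(\ell_q,\rn)\|\le c\,\|\{f_j\}\,|L_p(\ell_q,\rn)\|$ for the full range $1<p,q<\infty$, is the only non-elementary ingredient, and it is the step I expect to be the main obstacle. For smooth $\Omega$ it is classical: the kernel $\Omega(x/|x|)/|x|^n$ then satisfies the H\"ormander condition, so the Benedek--Calder\'{o}n--Panzone vector-valued extension applies. For $\Omega$ merely bounded with vanishing mean the pointwise H\"ormander condition can fail, and one has to invoke the theory of rough homogeneous singular integrals --- either the $L^s$-H\"ormander condition of Rubio de Francia--Ruiz--Torrea, which such kernels do satisfy, or the $A_p$-weighted bounds of Duoandikoetxea--Rubio de Francia together with Rubio de Francia extrapolation (cf.\ also Subsection 4.3). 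If one were content with $q$ between $2$ and $p$, Remark \ref{62:GG} would already reduce (iii) to the scalar $L_p$-boundedness of $T^\Omega$, but the stated range $1<q<\infty$ genuinely requires the rough vector-valued inequality.

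Granting (i)--(iii): Assertion \ref{60:GG} of Theorem \ref{vB:thm} (with the given $p,q,r$) gives the extension $T^\Omega:\Lci\hra\Lci$ and $\{T^\Omega\}_{j\in\nat_0}:\Lciv\hra\Lciv$, unique because $D(\rn)$ is dense in $\Lci$ (Remark \ref{GF3}). For the predual-Morrey statement I would apply Theorem \ref{vB:thm} not to $T^\Omega$ itself but to its formal transpose $(T^\Omega)^{t}=T^{\widetilde\Omega}$, where $\widetilde\Omega(\theta)\equiv\Omega(-\theta)$ is again in $L_\infty({\cal S}^{n-1})$ with vanishing mean and hence again satisfies (i)--(iii) (with the same $c_1$); choosing the internal Morrey exponent $-n-\vr$, which lies in $(-n/p,0)$ precisely because $-n<\vr<-n/p'$, and the internal $q$, Assertion \ref{60b:GG} yields a bounded operator on $\Hr$ and on $\Hv$ which agrees on $D(\rn)$ with $(T^{\widetilde\Omega})^{t}=T^\Omega$ (this last identification is the duality chain \eqref{TG1} read without its final, self-adjointness, step), and it is the unique bounded extension of $T^\Omega$ to $\Hr$ since $D(\rn)$ is dense there. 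Finally, Assertion \ref{59:GG} (applied to $T^\Omega$ with the given $p,q,r$) gives bounded extensions with $\{T^\Omega\}_{j\in\nat_0}:\Lv\hra\Lv$; their non-uniqueness --- the ``infinitely many'' extensions --- is precisely Remark \ref{62:GG} (following \cite[Remark 5.3]{RoT14_2}). At the endpoint $r=-n/p$ one has $\Lci=\Lr=L_p(\rn)$ and $\Lv=L_p(\ell_q,\rn)$, so everything collapses to the classical vector-valued Calder\'{o}n--Zygmund boundedness established in (iii).
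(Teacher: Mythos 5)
Your proposal is correct and follows essentially the same route as the paper: verify hypotheses (i)--(iii) of Theorem \ref{vB:thm} for the constant sequence $T_j\equiv T^\Omega$, citing Duoandikoetxea--Rubio de Francia \cite{DRF86} for the rough vector-valued $L_p(\ell_q)$-bound, and obtain the $\Hv$-statement by applying the theorem to the transposed kernel $T^{\Omega(-\cdot)}$ and identifying its dual with $T^\Omega$ on $D(\rn)$ as in \eqref{TG1}. The only (harmless) deviation is that you establish $T^\Omega:D(\rn)\rightarrow C(\rn)$ by a bounded-multiplier argument, whereas the paper derives the stronger $T^\Omega:D(\rn)\rightarrow \text{Lip}(\rn)$ from Sobolev mapping properties as in \cite{RT13}.
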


\begin{proof}
We observe that $T^\Omega: D(\rn)\rightarrow \text{Lip}(\rn)$. 
Indeed, by the same arguments as in \cite[proof of Step 2 of Thm. 1.1, p. 8]{RT13} we get the mapping properties $T^\Omega: W_p^k(\rn)\hra W_p^k(\rn)$ for Sobolev spaces which lead to the above assertion by means of Sobolev type embeddings. 
Moreover, for $\Omega \in L_\infty({\cal S}^{n-1})$ it holds $T^\Omega:L_p(\ell_q,\rn)\hra L_p(\ell_q,\rn)$ by \cite{DRF86}.
Now we obtain $T^\Omega: \Lv\hra\Lv$ for $\Omega \in L_\infty({\cal S}^{n-1})$ applying Theorem \ref{vB:thm}.
Note that the dual of the extension of $T^{\Omega(-\cdot)}$ to $\Lci$ coincides with $T^{\Omega}$ on $D(\rn)$ by the same arguments as in \eqref{TG1}.
\end{proof}

\subsubsection{Alternative approach using some Muckenhoupt weights} \label{TGIS}

The following alternative method due to Triebel \cite[Section 2.5.3, Prop. 2.25, Rem. 2.26]{T-HS} yields extensions of Calder\'{o}n-Zygmund operators which are bounded in $\Lr$. He studied the boundedness of $T^{\Omega}$ with $\Omega \in C^1({\cal S}^{n-1})$. Here we generalize his approach to some non-convolution type Calder\'{o}n-Zygmund operators. 
At first we observe that Morrey spaces $\Lr$ are continuously embedded into some Muckenhoupt weighted $L_p$-spaces.
Recall that $w_\alpha(\cdot)=(1+|\cdot|^2)^\frac{\alpha}{2}$, and that $L_p (\rn, w_\alpha)$ be the corresponding weighted $L_p$-space, normed as in \eqref{GiE}.
\begin{Prop}[Proposition 2.10 in \cite{T-HS}] \label{TttL}
Let $1<p<\infty$, $-\frac{n}{p}\leq r<0$, $-n<\alpha\,p<-n-rp$.
Then it holds
\begin{\eq} \label{74:GG}
L^r_p (\rn)  
\hra \Lw. 
\end{\eq}
\end{Prop}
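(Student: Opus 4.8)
The goal is to show that for $1<p<\infty$, $-n/p\le r<0$ and $-n<\alpha p<-n-rp$ one has
\[
\|w_\alpha f\,|\,L_p(\rn)\|\le c\,\|f\,|\,L^r_p(\rn)\|.
\]
The plan is to dyadically decompose $\rn$ into annuli centered at the origin, use the fact that $w_\alpha$ is essentially constant on each annulus, and then control the $L_p$-mass of $f$ on each annulus by a single ball of comparable radius via the Morrey norm. Concretely, write $A_0\equiv B_1(0)$ and $A_k\equiv B_{2^k}(0)\setminus B_{2^{k-1}}(0)$ for $k\in\nat$, so that $\rn=\bigcup_{k\ge 0}A_k$ up to a null set. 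On $A_k$ one has $w_\alpha(x)\cong 2^{k\alpha}$ (with the understanding that for $k=0$ this is $\cong 1$), since $\alpha$ can have either sign but $(1+|x|^2)^{\alpha/2}$ is comparable to $2^{k\alpha}$ whenever $2^{k-1}\le|x|<2^k$.

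The main computation then runs as follows. Using $A_k\subset B_{2^k}(0)$ and the definition of the Morrey norm (in the ball form, $\|f\,|\,L_p(B_R(x))\|\le R^{n/p+r}\|f\,|\,L^r_p(\rn)\|$), I would estimate
\[
\int_{\rn}|w_\alpha(x)f(x)|^p\,\di x
=\sum_{k\ge 0}\int_{A_k}|w_\alpha(x)|^p\,|f(x)|^p\,\di x
\le c\sum_{k\ge 0}2^{k\alpha p}\int_{B_{2^k}(0)}|f(x)|^p\,\di x
\le c\sum_{k\ge 0}2^{k\alpha p}\,2^{k(n+rp)}\,\|f\,|\,L^r_p(\rn)\|^p.
\]
The geometric series $\sum_{k\ge 0}2^{k(\alpha p+n+rp)}$ converges precisely when $\alpha p+n+rp<0$, i.e. $\alpha p<-n-rp$, which is the upper bound in the hypothesis. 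Taking $p$-th roots yields the claimed embedding with a finite constant depending only on $n,p,r,\alpha$. (The lower bound $-n<\alpha p$ is not needed for the embedding itself; it is the natural constraint ensuring $w_\alpha\in A_p$, i.e.\ that $L_p(w_\alpha,\rn)$ is a sensible Muckenhoupt-weighted space in the subsequent arguments, and it guarantees local integrability of $w_\alpha^p$ near the origin.)

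The only mildly delicate point — and the place where a reader might want care — is handling the innermost region $k=0$ together with the comparability $w_\alpha(x)\cong 2^{k\alpha}$ uniformly in the sign of $\alpha$: for $\alpha<0$ the weight is largest near the origin, so one must check that $\int_{B_1(0)}|f|^p<\infty$, which follows since $f\in L_p^{\mathrm{loc}}(\rn)$ and in fact $\int_{B_1(0)}|f|^p\le \|f\,|\,L^r_p(\rn)\|^p$ directly from the Morrey norm with $R=1$. There is no genuine obstacle here; the argument is a routine dyadic summation, and the whole content of the proposition is the bookkeeping that matches the exponent of the geometric series to the stated range of $\alpha$.
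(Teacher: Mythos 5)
Your proof is correct and follows essentially the same route as the paper: a dyadic decomposition of $\rn$ into a unit ball plus annuli, the comparability $w_\alpha\cong 2^{k\alpha}$ on each annulus, the Morrey-norm bound $\int_{B_{2^k}(0)}|f|^p\le 2^{k(n+rp)}\|f\,|\,L^r_p(\rn)\|^p$, and convergence of the geometric series exactly under $\alpha p+n+rp<0$. Your side remarks (that the lower bound $-n<\alpha p$ is not used for the embedding itself, and that the $k=0$ term is controlled by the Morrey norm at $R=1$) are also accurate.
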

\begin{proof}
Let $f\in\Lr$. Then \eqref{74:GG} follows from
\begin{align*}
  \int_{\rn} |f(x) w_\alpha(x)|^p \di x &\le c \left( \int_{|x|\le 1} |f(x)|^p \di x + \sum_{j\in\nat_0} 2^{j\alpha\,p} \int_{2^{j}\le |x|\le 2^{j+1}} |f(x)|^p \di x \right)
	\\ &\le \hat{c} \left( \int_{|x|\le 1} |f(x)|^p \di x + \sum_{j\in\nat_0} 2^{j(\alpha \,p+n+rp)} \left\|f|\Lr\right\|^p \right)
	\\ &\le \bar{c} \left\|f|\Lr\right\|^p.
\end{align*}
\end{proof}

\begin{Thm} \label{vB2:thm}
Let $1<p<\infty$, $-\frac{n}{p}< r<0$, $-n < \vr < -\frac{n}{p'}$. Let $T$ be an operator with domain $D(\rn)$ satisfying

		 \[\left\|Tf|L_{2}(\rn)\right\|
			\leq c_1 \left\|f|L_{2}(\rn)\right\|
	\] 
	where the constant $c_1$ is independent of $f\in D(\rn)$
and
	\begin{\eq} \label{2_59:GG}
	  (T f)(y) = \lim_{\varepsilon \searrow 0} \int_{z\in\real^n, |y-z|\ge \varepsilon} K(y,z) f(z)\di z  
	\end{\eq}
	almost everywhere for all $ f\in D(\rn)$, where the function $K(\cdot,\cdot)$ defined $\rn \times \rn \setminus \{(x,x):x\in\rn\} $ satisfies the conditions $|K(x,y)|\leq c_2 |x-y|^{-n}$ and 
\begin{align*}
  &|K(x,y)-K(x',y)|\leq c_2 \frac{|x-x'|^\delta}{(|x-y|+|x'-y|)^{n+\delta}},~\\&\qquad
  \text{ whenever }~ 2|x-x'|\le \max(|x-y|,|x'-y|), \\
  &|K(x,y)-K(x,y')|\leq c_2 \frac{|y-y'|^\delta}{(|x-y|+|x-y'|)^{n+\delta}},~\\&\qquad
	\text{ whenever }~ 2|y-y'|\le \max(|x-y|,|x-y'|).
\end{align*}
Then the following statements hold true.
\begin{enumerate} 
\item \label{2_60:GG}
There are linear and bounded extensions of $T$ to $\Lr$. 
\item \label{2_60b:GG}
There is an unique linear and bounded extension of $T$ to $\Lci$ and to $\Hr$.
\end{enumerate}
\end{Thm}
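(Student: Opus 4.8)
The plan is to combine the embedding of $\Lr$ into Muckenhoupt-weighted Lebesgue spaces with the classical weighted Calder\'{o}n-Zygmund theory, and then to reach $\Lci$ and $\Hr$ by density and duality. For Assertion \ref{2_60:GG} I would first recall that, since $T:L_2(\rn)\hra L_2(\rn)$ and $K$ is a standard Calder\'{o}n-Zygmund kernel, the classical weighted (Coifman-Fefferman) theorem provides bounded extensions $T:L_p(w,\rn)\hra L_p(w,\rn)$ for every $A_p$-weight $w$; in particular $T:L_p(\rn)\hra L_p(\rn)$ and $T:\Lw\hra\Lw$ whenever $-n/p<\al<n/p'$. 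Fixing $\al$ with $-n/p<\al<-n/p-r$ (non-empty because $r<0$), Proposition \ref{TttL} (see \eqref{74:GG}) gives $\Lr\hra\Lw$ for this $\al$, so the weighted extension $T_\al$ is in particular defined on all of $\Lr$ and restricts to $T$ on $D(\rn)$. It then remains to prove the a priori estimate $\|T_\al f|\Lr\|\le c\|f|\Lr\|$, which I would obtain by the near/far splitting of Step 1 of the proof of Theorem \ref{vB:thm}: for $f\in\Lr$ and a ball $B_R(x)$ write $f=f\chi_{B_{2R}(x)}+f\chi_{\rn\setminus B_{2R}(x)}=:f_1+f_2$ (both dominated by $|f|\in\Lw$, hence $T_\al f=T_\al f_1+T_\al f_2$); on the compactly supported $L_p$-function $f_1$ the extension $T_\al$ agrees with the unweighted one, so $\|T_\al f_1|L_p(B_R(x))\|\le\|Tf_1|L_p(\rn)\|\le c\|f|L_p(B_{2R}(x))\|\le cR^{n/p+r}\|f|\Lr\|$, while for $y\in B_R(x)$ the support of $f_2$ is disjoint from $B_R(x)$, so $T_\al f_2(y)=\int_{\rn}K(y,z)f_2(z)\di z$ and, using $|K(y,z)|\le c_2|y-z|^{-n}$, $|y-z|\ge|z-x|/2$, a dyadic annular decomposition of $\{|z-x|\ge 2R\}$, H\"older's inequality and $\sum_{i\ge0}2^{ir}<\infty$ (here $r<0$ is decisive), $|T_\al f_2(y)|\le cR^{r}\|f|\Lr\|$, whence $\|T_\al f_2|L_p(B_R(x))\|\le cR^{n/p+r}\|f|\Lr\|$. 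Summing and taking the supremum over $x$ and $R>0$ gives $T_\al:\Lr\hra\Lr$ (this extension being non-unique, cf.\ Remark \ref{62:GG}).

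For the $\Lci$-part of Assertion \ref{2_60b:GG}, the restriction $T_\al|_{\Lci}$ is the unique bounded map $\Lci\to\Lr$ agreeing with $T$ on the dense subspace $D(\rn)$; since $\Lci$ is closed in $\Lr$, to upgrade the target to $\Lci$ it is enough to show $Tf\in\Lci$ for $f\in D(\rn)$. For such $f$ the function $Tf$ is continuous on $\rn\setminus\supp f$ (kernel smoothness), satisfies $|Tf(y)|\le c|y|^{-n}$ for $|y|$ large (kernel size bound and compactness of $\supp f$), and is locally bounded near $\supp f$ as well -- here one uses that a Calder\'{o}n-Zygmund operator with a standard kernel maps $D(\rn)$ into $L_\infty(\rn)$ (in the convolution case this is immediate from the Fourier side). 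Local boundedness forces $\lim_{R\to 0}\sup_x R^{-(n/p+r)}\|Tf|L_p(B_R(x))\|=0$, and the decay at infinity lets one approximate $Tf$ in $\Lr$ by the compactly supported continuous functions $\psi_R Tf$ ($\psi_R$ a smooth cut-off); by the scalar case of Proposition \ref{84A:GG} then $Tf\in\overline{C_0(\rn)}^{\|\cdot|\Lr\|}=\Lci$. Hence $T_\al|_{\Lci}:\Lci\hra\Lci$, which is the unique bounded extension of $T$ to $\Lci$ by density.

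For the $\Hr$-part I would pass to the formal transpose $T^{\top}$, $T^{\top}f(y)=\lim_{\ve\searrow 0}\int_{|y-z|\ge\ve}K(z,y)f(z)\di z$: it again has domain $D(\rn)$, is $L_2$-bounded with the same constant, and its kernel $(x,y)\mapsto K(y,x)$ satisfies the same size and smoothness conditions; so the $\Lci$-statement just proved, applied to $T^{\top}$ with the same $p,r$, yields a unique bounded $\widetilde{T^{\top}}:\Lci\hra\Lci$. Since $(\Lci)'=\Hr$ with $r+\vr=-n$ (Theorem \ref{TDp2:GG}), the dual $(\widetilde{T^{\top}})':\Hr\hra\Hr$ is bounded, and testing against $f,g\in D(\rn)$ shows $(\widetilde{T^{\top}})'g=Tg$ almost everywhere, so $(\widetilde{T^{\top}})'$ is a bounded extension of $T$ to $\Hr$, unique because $D(\rn)$ is dense there (cf.\ Remark \ref{GF3}). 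The step I expect to be the main obstacle is exactly the local boundedness of $Tf$ for $f\in D(\rn)$: this is the only place where the kernel smoothness (beyond the mere size bound) and the $L_2$-boundedness must genuinely be combined, and it is what forces the extension to land in $\Lci$ rather than only in $\Lr$; everything else reduces to the weighted Calder\'{o}n-Zygmund theorem, the near/far splitting, and the duality of Theorem \ref{TDp2:GG}.
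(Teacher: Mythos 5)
Your overall route is the same as the paper's: embed $\Lr$ into $\Lw$ via Proposition \ref{TttL}, invoke the weighted Calder\'on--Zygmund theorem to extend $T$ to $\Lw$, use the kernel size bound off the support together with the near/far splitting of Step 1 of Theorem \ref{vB:thm} to get the $\Lr$-estimate, and obtain the $\Hr$-part by dualizing the operator with the transposed kernel. Two remarks on the details. First, in the far-part estimate you use the representation $T_\al f_2(y)=\int K(y,z)f_2(z)\,\di z$ for $y\notin\supp f_2$ with $f_2\in\Lw$ merely measurable; this is not automatic for the abstractly extended operator and is exactly what the paper secures by first proving that the maximal truncations are bounded on $\Lw$ and then invoking the almost-everywhere convergence theorem to upgrade \eqref{2_59:GG} from $D(\rn)$ to all of $\Lw$. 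You should make this step explicit, since it is the point where \eqref{43a:GG} becomes available for all $f\in\Lr$ rather than only for test functions.

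Second, and more seriously, your argument that $Tf\in\Lci$ for $f\in D(\rn)$ has a gap: to apply Proposition \ref{84A:GG} you need $\psi_R\,Tf$ to be \emph{continuous}, but under the hypotheses of Theorem \ref{vB2:thm} the function $Tf$ is continuous only off $\supp f$; continuity (or even a usable modulus of continuity) across $\supp f$ is not available for a non-convolution operator whose kernel has only H\"older regularity $\delta$ --- this is precisely why Theorem \ref{vB:thm} must \emph{assume} $T_j:D(\rn)\rightarrow C(\rn)$ and why Theorem \ref{vB2:thm} drops that assumption. Also, the vanishing condition $\lim_{R\to0}\sup_x R^{-(n/p+r)}\|Tf|L_p(B_R(x))\|=0$ you extract from local boundedness is not by itself the characterization of $\Lci$ given in Proposition \ref{84A:GG}. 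The paper circumvents all of this cleanly: the $\al=0$ instance of the weighted theorem gives $\|Tf|L_{-n/r}(\rn)\|\le c\|f|L_{-n/r}(\rn)\|$, hence $Tf\in L_{-n/r}(\rn)$, and since $D(\rn)$ is dense in $L_{-n/r}(\rn)$ and $L_{-n/r}(\rn)\hra\Lr$, one gets $L_{-n/r}(\rn)\hra\Lci$ and therefore $Tf\in\Lci$. Your own observations ($Tf\in L_2$, boundedness near $\supp f$, decay $|Tf(y)|\le c|y|^{-n}$) would also place $Tf$ in $L_{-n/r}(\rn)$ and thus close the gap, but note that $T:D(\rn)\rightarrow L_\infty(\rn)$ itself requires the nontrivial lemma on uniform bounds for $\int_{\ve<|y-z|<N}K(y,z)\,\di z$, whereas the $L_u$-boundedness is quoted directly from the weighted theory.
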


\begin{proof}
  By \cite[Cor. 9.4.7]{G09} there is an unique linear and bounded extension $\widetilde{T}$ of $T$ to $\Lw$ with $-n<\alpha\,p< n(p-1) $.  
	Therefore, Proposition \ref{TttL} yields $\widetilde{T}:\Lr\hra \Lw$. 
	We have even 
	\[
	  \sup_{\varepsilon > 0} \left| \int_{z\in\real^n, |y-z|\ge \varepsilon} K(y,z) f(z)\di z\right| : \Lw\hra\Lw
	\]
	by \cite[Thm. 9.4.6]{G09}. Together with \eqref{2_59:GG} we see that
	\[
	  (\widetilde{T} f)(y) = \lim_{\varepsilon \searrow 0} \int_{z\in\real^n, |y-z|\ge \varepsilon} K(y,z) f(z)\di z
	\]  
almost everywhere for all  $f\in\Lw$ by \cite[Thm. 2.1.14]{G09}. Now \eqref{43a:GG} holds for all $f\in\Lr$ with $y\notin \supp f$. As in Step 1 of the proof of Theorem \ref{vB:thm} we obtain 
$\widetilde{T}:\Lr\hra\Lr$ that is
Assertion \ref{2_60:GG}.
In particular, $\widetilde{T}:\Lci\hra\Lr$.
For $\alpha=0$ by \cite[Cor. 9.4.7]{G09} we have especially
\[
  \left\|Tf\left|L_{-\frac{n}{r}}(\rn)\right.\right\|\le c \left\|f\left|L_{-\frac{n}{r}}(\rn)\right.\right\|
\]
for all $f\in D(\rn)$ where the constant $c$ does not depend on $f$.
Hence, $T:D(\rn)\rightarrow L_{-{n}/{r}}(\rn)$.
Because of the embedding $L_{-{n}/{r}}(\rn) \hra \Lr$ and the density of $D(\rn)$ in $L_{-{n}/{r}}(\rn)$ we even have $T: D(\rn) \rightarrow \Lci$. 
Indeed, let $f\in D(\rn)$. Then $Tf\in L_{-{n}/{r}}(\rn)$ and thus there is a sequence of functions of $D(\rn)$ which tends to $Tf$ in $L_{-{n}/{r}}(\rn)$ and hence in $\Lr$ which shows $Tf \in \Lci$. 
Thus, $\widetilde{T}:\Lci\hra\Lci$.
The adjoint kernel of $K(x,y)$ given by $\ol{K(y,x)}$ also satisfies the required assumptions on the kernel. Hence, its corresponding operator is also bounded in $L_{p}(\rn)$ 
(cf. \cite[Def. 8.1.2]{G09}) but its dual coincides by the same arguments as in Step 3 of the proof of Theorem \ref{vB:thm} with the operator $T$ (with the kernel $K(x,y)$) on $D(\rn)$ which implies Assertion \ref{2_60b:GG}.
\end{proof}
\begin{Rem}
  Let us point out that for this method the embedding of $\Lr$ in some Muckenhoupt weighted space is crucial for extending the domain of the considered Calder\'{o}n-Zygmund operators to $\Lr$. 
  Recall the fact that the Hilbert transform is acting in $\Lw$ if, and only if, $w_\alpha$ is a Muckenhoupt weight. Moreover, we needed 
	\[
	  (\widetilde{T} f)(y) = \lim_{\varepsilon \searrow 0} \int_{z\in\real^n, |y-z|\ge \varepsilon} K(y,z) f(z)\di z  
	\]
almost everywhere for all $ f\in\Lr$.	This is a rather deep result in comparison to the $L_p$-boundedness which we require in Theorem \ref{vB:thm}.
Finally, let us emphasize again that the extension in Part 1	is by no means unique.
\end{Rem}
\subsection{Vector-valued maximal inequalities and maximal Calder\'{o}n-Zygmund operators} \label{TGIS2}
\begin{Def} 
We define \textit{maximal Calder\'{o}n-Zygmund operators} with homogeneous kernels with degree $-n$ by setting 
\begin{\eq*} 
  (T^\Omega_* f)(y)\equiv \sup_{\varepsilon > 0} \left|\int_{|z|\geq \varepsilon} \frac{\Omega(z/|z|)}{|z|^n}f(y-z)\di z\right|
\end{\eq*}
where $f\in \bigcup_{1\leq p<\infty} L_p(\rn)$ and $\Omega \in L_\infty({\cal S}^{n-1})$ with zero integral and ${\cal S}^{n-1}$ denotes the unit sphere.
As usual, the Hardy-Littlewood maximal operator $M$ 	is given by 
	\begin{equation*} 
		(M f)(y)\equiv\sup_{R>0} \frac{1}{|B_R(y)|} \int_{B_R(y)}\left|f(z)\right|\di z, \qquad f \in L_1^\text{loc}(\real^n). 
	\end{equation*}
\end{Def}

\begin{Rem}
  If $f\in \bigcup_{1\leq p<\infty} L_p(\rn)$ then 
	\[
	  \left|\int_{|z|\geq \varepsilon} \frac{\Omega(z/|z|)}{|z|^n}f(y-z)\di z\right|
	\]
	is bounded for each $\varepsilon >0$ and $y\in\rn$ by H\"older's inequality. Hence $(T^\Omega_* f)(y)$ is well-defined for all $y\in\rn$, but might be infinite.
\end{Rem}

\begin{Corol} \label{LS4}
Let $1<p<\infty$, $-\frac{n}{p}\leq r<0$, $-n < \vr < -\frac{n}{p'}$, $1< q<\infty$.
Then 
\begin{align}\begin{split} \label{TGIS3} &\left\{M\right\}_{j\in\nat_0}: \Lciv\hra\Lciv \quad\text{and}\quad \\&
\left\{M\right\}_{j\in\nat_0}: \Lv\hra\Lv.
\end{split}\end{align}
Moreover, if $\Omega \in C^1({\cal S}^{n-1})$, then
\begin{\eq} \label{TGIS5}
T^\Omega_*: \Lci\hra\Lci \quad\text{and}\quad 
T^\Omega_*: \Lr\hra\Lr.
\end{\eq} 
\end{Corol}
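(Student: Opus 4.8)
The plan is to deduce both assertions of Corollary~\ref{LS4} from the general machinery of Theorem~\ref{vB:thm} and Theorem~\ref{vB2:thm}, exactly as was done for $T^\Omega$ in Corollary~\ref{vB:corol}. First I would verify the hypotheses of Theorem~\ref{vB:thm} for the constant sequence $T_j = M$. The operator $M$ is sublinear and satisfies the weak subadditivity and oddness conditions in \eqref{ToY} trivially (indeed $M(-f)=M(f)$ and $M(f_1+f_2)\le M(f_1)+M(f_2)$, $M0=0$); moreover for $f\in D(\rn)$ one checks $Mf\in C(\rn)$, so property (i) holds. Property (ii), i.e.\ \eqref{43a:GG}, holds because for $y\notin\supp f$ and any ball $B_R(y)$ containing a point of $\supp f$ one has $R\gtrsim \dist(y,\supp f)$ and hence $\frac{1}{|B_R(y)|}\int_{B_R(y)}|f|\le c\int_{\rn}\frac{|f(z)|}{|y-z|^n}\,\di z$; taking the supremum over $R$ gives the pointwise bound with $c_1=c_1(n)$. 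Property (iii), the vector-valued $L_p(\ell_q,\rn)$-boundedness of $M$, is the classical Fefferman--Stein inequality (valid for $1<p,q<\infty$). Theorem~\ref{vB:thm}\ref{60:GG} then yields $\{M\}_{j}:\Lciv\hra\Lciv$, and since $M$ is not linear we invoke Part~\ref{59:GG} only in the form actually proved there for sublinear operators; but in fact for $M$ the second inclusion in \eqref{TGIS3}, namely $\{M\}_j:\Lv\hra\Lv$, also follows directly from Step~1 of the proof of Theorem~\ref{vB:thm}, where the estimate $\|T_jf_j|\Lv\|\le c\|f_j|\Lv\|$ is established for subadditive operators on sequences of $D(\rn)$-functions, combined with the density of such sequences in the relevant Morrey space and a Fatou-type argument to pass to general $\{f_j\}$.

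For the second part, concerning $T^\Omega_*$ with $\Omega\in C^1(\Scal^{n-1})$, the scalar statement \eqref{TGIS5} is what is claimed, so no vector-valued inequality is needed and I would route through Theorem~\ref{vB2:thm} (the weighted approach) rather than Theorem~\ref{vB:thm}. The point is that $T^\Omega_*$ coincides with the maximal truncation $\sup_{\ve>0}\big|\int_{|y-z|\ge\ve}K(y,z)f(z)\,\di z\big|$ for the convolution kernel $K(y,z)=\Omega((y-z)/|y-z|)|y-z|^{-n}$, which under $\Omega\in C^1(\Scal^{n-1})$ (with zero mean) satisfies the size and Hörmander-type smoothness conditions required in Theorem~\ref{vB2:thm}, and the associated singular integral $T^\Omega$ is $L_2$-bounded. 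The proof of Theorem~\ref{vB2:thm} shows, via \cite[Thm.~9.4.6, Thm.~9.4.7]{G09}, that $\sup_{\ve>0}|\int_{|y-z|\ge\ve}K(y,z)f(z)\,\di z|:\Lw\hra\Lw$ for suitable Muckenhoupt weights $w_\alpha$; together with the embedding $\Lr\hra\Lw$ of Proposition~\ref{TttL} this gives a bounded extension of $T^\Omega_*$ from $\Lr$ into $\Lw$, and then the Step~1 argument of Theorem~\ref{vB:thm} (which uses only \eqref{43a:GG}, valid here for all $f\in\Lr$, $y\notin\supp f$, because the maximal truncation dominates the relevant tails) upgrades this to $T^\Omega_*:\Lr\hra\Lr$. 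The mapping $T^\Omega_*:\Lci\hra\Lci$ follows since $T^\Omega_*$ maps $D(\rn)$ into $L_{-n/r}(\rn)$ (the unweighted case $\alpha=0$) and $L_{-n/r}(\rn)$ is densely contained in $\Lci$, so $T^\Omega_* f\in\Lci$ for $f\in D(\rn)$ and one extends by continuity.

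I would spell out the three hypotheses for $M$ in a short paragraph, cite \cite[Cor.~4.6.3 or similar]{G04} (or the Fefferman--Stein paper) for the vector-valued maximal inequality, and for $T^\Omega_*$ just note that $C^1$-regularity of $\Omega$ gives the kernel estimates needed in Theorem~\ref{vB2:thm} and that the maximal truncation is exactly the operator $\sup_\ve|\cdots|$ appearing there. The main obstacle, and the only place requiring genuine care, is the fact that $M$ and $T^\Omega_*$ are not linear, so Parts~\ref{60b:GG} and~\ref{59:GG} of Theorem~\ref{vB:thm} do not literally apply as stated (they assume linearity for the duality and bidual arguments); the remedy is that the $\Lv$-boundedness for sublinear operators is already contained in Step~1 of the proof of Theorem~\ref{vB:thm}, so one should cite that step directly rather than the packaged Assertion~\ref{59:GG}, and likewise for $T^\Omega_*$ one argues entirely within Theorem~\ref{vB2:thm}, whose Step~1-type argument is sublinearity-friendly. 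A secondary technical point to check is that \eqref{43a:GG} genuinely holds for $M$ with a dimensional constant and, for $T^\Omega_*$, that it extends to all of $\Lr$ once the a.e.\ representation of the extension as a maximal truncation is in hand; both are routine given the tools cited above.
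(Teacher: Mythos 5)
Your overall strategy is the paper's: verify hypotheses (i)--(iii) of Theorem \ref{vB:thm} for the sublinear operators $M$ and $T^\Omega_*$, use Part 1 for the $\Lciv$- and $\Lci$-statements, and, since Parts 2 and 3 require linearity, obtain boundedness on the full Morrey spaces by re-running Step 1 of that proof. Your verification of \eqref{43a:GG} for $M$ and your citation of the Fefferman--Stein inequality for hypothesis (iii) are fine, and your route to $T^\Omega_*:\Lci\hra\Lci$ through $T^\Omega_*(D(\rn))\subset L_{-n/r}(\rn)\subset\Lci$ is a legitimate alternative to the paper's, which instead establishes $M, T^\Omega_*:D(\rn)\rightarrow \mathrm{Lip}(\rn)$ (the former via $|Mf_h-Mf|\le M(f_h-f)$, the latter via Cotlar's inequality $T^\Omega_* f\le c\,(M(|T^\Omega f|)+Mf)$) so that hypothesis (i) of Theorem \ref{vB:thm} holds literally; you should not leave ``one checks $Mf\in C(\rn)$'' unproved, since some such argument is needed for (i).

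The one step that fails as written is your justification of $\{M\}_j:\Lv\hra\Lv$ by ``the density of such sequences in the relevant Morrey space'': finite sequences of $D(\rn)$-functions are dense in $\Lciv$ but not in $\Lv$ --- the inclusion $\Lciv\subset\Lv$ is strict in general, which is the entire reason the paper passes through biduals in the linear case --- so no continuity/density extension from $D(\rn)$-data can reach a general element of $\Lv$. The correct (and simpler) repair, which is what the paper does, is to note that $M$ is pointwise defined on all of $L_1^{\mathrm{loc}}(\rn)\supset\Lr$ and that the pointwise bound \eqref{43a:GG} holds for every $f\in\Lr$ with $y\notin\supp(f)$ (your own derivation of it never uses $f\in D(\rn)$); hence the decomposition $f_j=\sum_i\varphi_i f_j$ and the estimates of Step 1 apply verbatim to an arbitrary $\{f_j\}\in\Lv$, with no approximation at all. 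Your parallel treatment of $T^\Omega_*$ already contains exactly this point --- you invoke the weighted bound of \cite[Thm.~9.4.6]{G09} together with Proposition \ref{TttL} precisely so that $T^\Omega_* f$ is well defined and \eqref{43a:GG} is available for all $f\in\Lr$ --- so argue the same way for $M$, where it is even easier, rather than through density. (A monotone-convergence argument with truncations of $f$ could also be made to work for $M$, but it is unnecessary.)
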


\begin{proof} 
At first we show that $M:D(\rn)\rightarrow \text{Lip}(\rn)$. Let $f\in D(\rn)$ and $f_h(\cdot)\equiv f(\cdot+h)$ for $h\in\rn$. By sublinearity of $M$ we obtain 
\[
  M f_h = M (f_h-f +f) \leq M (f_h-f) + Mf \text{ and thus } |M f_h - Mf|\leq M (f_h-f). 
\]
It follows that
\[
  |(M f)(x+h)-(Mf)(x)|= |(M f_h)(x) - (Mf)(x)|\leq [M (f_h-f)](x)\leq L h ,
\]
where $L$ is the Lipschitz constant of $f$ and $x\in\rn$. (We even showed $M:\text{Lip}(\rn)\rightarrow \text{Lip}(\rn)$ with the arguments due to \cite[Remarks 2.2]{K97}.)
A version of Cotlar's inequality leads to the estimate
\[
  (T^\Omega_* f)(x) \leq c ( [M(|T^\Omega f|)](x) + (Mf)(x) )
\]
for $x\in\rn$ (cf. \cite[Lemma 5.15]{D01}). As above we obtain
\[
  |(T^\Omega_* f)(x+h)-(T^\Omega_*f)(x)|= |(T^\Omega_* f_h)(x) - (T^\Omega_*f)(x)|\leq [T^\Omega_* (f_h-f)](x).
\]
Together with $T^\Omega: D(\rn)\rightarrow \text{Lip}(\rn)$ (cf. proof of Corollary \ref{vB:corol})
it follows from $M:\text{Lip}(\rn) \rightarrow \text{Lip}(\rn)$ also that $T^\Omega_*:D(\rn)\rightarrow \text{Lip}(\rn)$. 
Moreover, we claim that \eqref{43a:GG} holds also for $M$. Indeed, let $f\in D(\rn)$ with $y\notin \supp(f)$. Then there exists an $i\in\ganz$ such that $B_{2^i}(y)\cap \supp f = \emptyset$. Let ${f}^j\equiv\chi_{{B_{2^{j+1}}(y)}\setminus{B_{2^{j}}(y)}}{f}$ for $j\ge i$. Hence,
\begin{align*}
  |(Mf)(y)|&\le \sup_{R>0} \frac{1}{|B_R(y)|} \int_{B_R(y)} |f(z)| \di z \le \sum_{j=i}^\infty \sup_{R>0} \frac{1}{|B_R(y)|} \int_{B_R(y)} |f^j(z)| \di z 
	\\ &\le \sum_{j=i}^\infty \frac{1}{|B_{2^{j}}(y)|} \int_{\rn} |f^j(z)| \di z
	\le c \sum_{j=i}^\infty \int_{{B_{2^{j+1}}(y)}\setminus{B_{2^{j}}(y)}} \frac{|f(z)|}{2^{jn}} \di z
	\\ &\le c' \sum_{j=i}^\infty \int_{{B_{2^{j+1}}(y)}\setminus{B_{2^{j}}(y)}} \frac{|f(z)|}{|y-z|^n} \di z
	=c' \int_{\rn} \frac{|f(z)|}{|y-z|^n} \di z.
\end{align*}
Now Theorem \ref{vB:thm} implies the existence of unique continuous and bounded extensions of $M$ to $\Lciv$ and of $T^\Omega_*$ to $\Lci$. Since \eqref{43a:GG} also holds for $M$ and all $f\in\Lr$ in place of all
$f\in D(\rn)$ we achieve at \eqref{TGIS3} as in Step 1 of the proof of Theorem \ref{vB:thm}. Moreover,  
$T^\Omega_*$ is also well-defined on $\Lw$ if $-n<\alpha\,p<n(p-1)$ by \cite[Thm. 9.4.6]{G09}
(and not only on $\bigcup_{1\leq p<\infty} L_p(\rn)$) and hence on $\Lr$ by Proposition \ref{TttL}. Thus, \eqref{43a:GG} also holds for $T^\Omega_*$ and all $f\in\Lr$ in place of all $f\in D(\rn)$. This yields \eqref{TGIS5}. 
\end{proof}
\subsection{Fourier multipliers}
\subsubsection{Multipliers generated by characteristic and smooth functions} 

\begin{Corol} \label{LS3}
Let $1<p<\infty$, $-\frac{n}{p}\leq r<0$, $-n < \vr < -\frac{n}{p'}$, $1< q<\infty$.
Let $\{I_j\}_{j\in\nat_0}$ be a sequence of intervals on the real line, finite or infinite, and let $\{ S_j\}_j$ be the sequence of operators defined by 
\[
(S_j f)\,\hat{ }\,(\xi)=\chi_{I_j}(\xi) \hat{f}(\xi), ~~f\in D(\real),~ \xi\in\real. 
\]
Moreover, let $\psi\in S(\real^n)$ with $\psi(0)=0$. 
We define
\[
  \psi_j(\xi)=\psi(2^{-j}\xi) \quad \text{ and } \quad (\tilde{S_j} f)\,\hat{ }= \psi_j \hat{f} \quad \text{ for } \quad j\in\ganz, \ \xi\in\rn, \ f\in S'(\rn).
\]
Then the following statements hold true.
\begin{enumerate} 
\item \label{26:GiG1} 
There are unique linear and bounded extensions of $S_j$ to $\Lci$ and to $\Hr$ denoted again by $S_j$ and satisfying the mapping properties 
\begin{align*}\begin{split}
&\left\{S_j\right\}_{j\in\nat_0}: \Lcive\hra\Lcive \quad\text{and}\quad \\& \left\{S_j\right\}_{j\in\nat_0}: \Hve\hra\Hve
\end{split}\end{align*}
\item \label{26:GiG2}
There are infinitely many linear and bounded extensions of $S_j$ to $\Lr$ denoted again by $S_j$ such that
\[ \left\{S_j\right\}_{j\in\nat_0}: \Lve\hra \Lve.\] 
\item \label{26:GiG3}
We have the mapping properties
\begin{align*}
&\left\{\tilde{S_j}\right\}_{j\in\ganz}: \Lciv\hra\Lciv, \quad \\& \left\{\tilde{S_j}\right\}_{j\in\ganz}: \Hv\hra\Hv,
\end{align*}
and
\[
\left\{\tilde{S_j}\right\}_{j\in\ganz}: \Lv\hra\Lv~.
\] 
Here we used the notation $\ell_q=\ell_q(\ganz)$.
\end{enumerate}
\end{Corol}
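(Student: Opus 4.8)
The plan is to obtain everything from Theorem~\ref{vB:thm}: one checks, with constants independent of the index, its hypotheses~(i)--(iii) for the two sequences $\{S_j\}$ and $\{\tilde S_j\}$, and then reads off the three conclusions, invoking Remark~\ref{62:GG} for the non-uniqueness of the extensions to $\Lr$. Since Theorem~\ref{vB:thm} is stated for indices in $\nat_0$ while part~\ref{26:GiG3} uses $j\in\ganz$, one first relabels $\ganz\cong\nat_0$; this is harmless because all the estimates below are uniform in~$j$.

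For $\{S_j\}$, each operator is the Fourier multiplier with symbol $\chi_{I_j}$; writing a bounded interval as $\chi_{(a,b)}=\tfrac12\bigl(\operatorname{sgn}(\cdot-a)-\operatorname{sgn}(\cdot-b)\bigr)$, and a half-line or the whole line analogously, exhibits $S_{I_j}$ as a linear combination, with coefficients bounded independently of $I_j$, of the identity and of at most two modulated Hilbert transforms $f\mapsto e^{\rmi c\cdot}H(e^{-\rmi c\cdot}f)$. Then: (i) $S_j$ is linear and maps $D(\real)$ into $C(\real)$, because $\chi_{I_j}\hat\phi\in L_1(\real)$ for $\phi\in D(\real)$, so $S_j\phi$ is continuous and bounded. (ii) The convolution kernel of $S_{(a,b)}$ is $(e^{\rmi bx}-e^{\rmi ax})/(\rmi x)$, of modulus $\le 2|x|^{-1}$, and for a half-line the distributional kernel is $\tfrac12\delta$ plus a kernel of modulus $\le 2|x|^{-1}$, the $\delta$-part annihilating $f$ at any $y\notin\supp f$; hence $|(S_jf)(y)|\le c_1\int_{\real}|f(z)|\,|y-z|^{-1}\di z$ for $y\notin\supp f$, with $c_1$ independent of $j$, which is \eqref{43a:GG} with $n=1$. (iii) Since multiplication by $e^{-\rmi c\cdot}$ preserves the $\ell_q$-norm pointwise, the estimate $\|S_jf_j\,|\,L_p(\ell_q,\real)\|\le c\,\|f_j\,|\,L_p(\ell_q,\real)\|$ reduces, up to the bounded coefficients, to the vector-valued inequality $\|Hg_j\,|\,L_p(\ell_q,\real)\|\le c(p,q)\,\|g_j\,|\,L_p(\ell_q,\real)\|$ for the Hilbert transform (cf.\ \cite{G04}), the identity contribution being trivial. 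Theorem~\ref{vB:thm} then gives the $\Lcive$- and $\Lve$-statements. The formal adjoint of $S_{I_j}$ with respect to the bilinear pairing used in Theorem~\ref{vB:thm} is $S_{-I_j}$, the multiplier of the reflected interval, which is again an operator of the same type; so applying part~\ref{60b:GG} of the theorem to $\{S_{-I_j}\}$ and using the density of $D(\real)$ in $\Hre$, the dual of the $\Lcie$-extension of $S_{-I_j}$ is the unique bounded extension of $S_{I_j}$ to $\Hre$ and these extensions satisfy $\{S_j\}_j:\Hve\hra\Hve$. Together with Remark~\ref{62:GG} this yields parts~\ref{26:GiG1} and~\ref{26:GiG2}.

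For $\{\tilde S_j\}$ one has $\tilde S_jf=f*K_j$, $K_j=2^{jn}\check\psi(2^j\cdot)\in S(\rn)$, so (i) $\tilde S_j$ is linear and maps $D(\rn)$ into $S(\rn)\subset C(\rn)$. The rapid decay of $\check\psi$ gives $|K_j(x)|\le C_n\,2^{jn}(1+2^j|x|)^{-n}=C_n(2^{-j}+|x|)^{-n}\le C_n|x|^{-n}$ uniformly in~$j$, hence (ii) $|(\tilde S_jf)(y)|\le C_n\int_{\rn}|f(z)|\,|y-z|^{-n}\di z$ for $y\notin\supp f$; the same scaling yields $|\nabla K_j(x)|\le C_n|x|^{-n-1}$ uniformly in~$j$, while the multiplier $\psi(2^{-j}\cdot)$ has supremum $\sup|\psi|$, so the $\tilde S_j$ form a family of Calder\'on-Zygmund operators with uniform constants and (iii) follows from the vector-valued Calder\'on-Zygmund theorem (cf.\ \cite{G04}). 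Theorem~\ref{vB:thm} then produces part~\ref{26:GiG3}; the assertion in $\Hv$ follows as above by passing to the formal adjoint of $\tilde S_j$, which is the multiplier with the reflected symbol $\psi(-2^{-j}\cdot)$, where $\psi(-\cdot)$ is again a Schwartz function vanishing at the origin, so the duality argument of Theorem~\ref{vB:thm} (cf.\ \eqref{TG1}) provides the bounded extension of $\tilde S_j$ to $\Hv$.

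The points requiring care are the two uniform-in-$j$ requirements of Theorem~\ref{vB:thm}. The pointwise bound \eqref{43a:GG} (hypothesis~(ii)) must hold for arbitrary intervals, in particular unbounded ones, in the first case, which is why one uses the explicit kernel formulas together with the observation that the identity part is harmless off the support; in the second case it rests on the isotropic scaling $|K_j(x)|\le C_n(2^{-j}+|x|)^{-n}$. And for $\{S_j\}$ the vector-valued bound (hypothesis~(iii)) cannot be obtained from uniform Calder\'on-Zygmund constants, since the smoothness constants of the modulated Hilbert transforms grow with the modulation; it must instead be reduced, through the modulation invariance of the $\ell_q$-norm, to the vector-valued inequality for the Hilbert transform itself. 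For $\{\tilde S_j\}$, by contrast, the uniform Calder\'on-Zygmund constants do the job.
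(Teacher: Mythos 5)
Your argument follows the paper's own route almost exactly: the decomposition of $S_{I_j}$ into modulated Hilbert transforms together with the pointwise $\ell_q$-invariance of modulation to get the uniform vector-valued bound, the uniform kernel estimates $|K_j(x)|\le c|x|^{-n}$ and $|\nabla K_j(x)|\le c|x|^{-n-1}$ for $\tilde S_j$ feeding a vector-valued Calder\'on--Zygmund/H\"ormander argument, and duality with the reflected symbols $\chi_{-I_j}$, $\psi(-2^{-j}\cdot)$ for the $H^{\vr}$-assertions. The verifications of hypotheses (i)--(iii) of Theorem \ref{vB:thm} are sound and uniform in $j$ (the paper phrases the H\"ormander condition through the $\ell_2$-valued gradient bound of $\{\Psi_j\}$ rather than through $\sup_j|\nabla K_j|$, but for the diagonal operator on $\ell_q$ both suffice), and the relabelling $\ganz\cong\nat_0$ is indeed harmless.

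The one place where your proof delivers slightly less than what is claimed is the final assertion $\{\tilde S_j\}_{j\in\ganz}:\Lv\hra\Lv$ of part \ref{26:GiG3}. This part of the corollary is deliberately phrased differently from part \ref{26:GiG2}: it asserts the mapping property for the operators $\tilde S_j$ themselves, which are already defined on all of $S'(\rn)\supset\Lr$ as $\tilde S_jf=\Psi_j*f$, and not merely for \emph{some} extension of $\tilde S_j|_{D(\rn)}$. Invoking part 3 of Theorem \ref{vB:thm} only produces a bidual extension which is highly non-unique by Remark \ref{62:GG}, and since $D(\rn)$ is not dense in $\Lr$ there is no a priori reason for that extension to coincide with convolution by $\Psi_j$ on $\Lr$. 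The paper closes this by observing that the pointwise bound \eqref{43a:GG} holds for all $f\in\Lr$ with $y\notin\supp(f)$, not just for $f\in D(\rn)$, and then re-running Step 1 of the proof of Theorem \ref{vB:thm} directly on $\Lv$. Your uniform estimate $|K_j(x)|\le C_n|x|^{-n}$ gives exactly this, so the repair is one sentence; but as written the claim about $\tilde S_j$ acting on $\Lv$ is not established.
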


\begin{proof}
Part \ref{26:GiG1} and part \ref{26:GiG2} are consequences of Theorem \ref{vB:thm} (see also Corollary \ref{vB:corol}).
The required $L_{p}(\ell_q,\rn)$-boundedness follows from \cite[Corollary 8.2]{D01}. Alternatively, it suffices the $L_{p}(\ell_q,\rn)$-boundedness  of the Hilbert transform $(H f)(y)\equiv\frac{1}{\pi} \lim_{\varepsilon \searrow 0} \int_{|z-y|\geq \varepsilon} \frac{f(z)}{y-z} \di z, f\in S(\rn)$ (see e.g. \cite[Cor. 4.6.3]{G04}) This can be seen using the formula
  \[
    S_j f_j = \frac{i}{2} \left(M_{a_j} H M_{-a_j} f_j - M_{b_j} H M_{-b_j} f_j\right),
  \]
 where $I_j=(a_j,b_j)$    (with the obvious modifications if the interval is unbounded) and where $M_{a} f (\cdot)\equiv e^{2\pi i a\cdot} f (\cdot)$ (\cite[(3.9)]{D01}). 
 Now the desired result follows from Theorem \ref{vB:thm} (for $n=1$) taking into account also that the dual of the extension of the multiplier generated by $-I_j$ coincides with $S_j$ on $D(\rn)$ by the same arguments as for \eqref{TG1}.  
\par
Moreover, we observe that
\begin{\eq} \label{26:GiG5}
  \left\{\tilde{S_j}\right\}_{j\in\ganz}: L_p(\ell_q,\rn)\hra L_p(\ell_q,\rn)~.
\end{\eq}
This follows, for example, from \cite[(8.1), page 158]{D01}. The needed H\"ormander condition is fulfilled by \eqref{GTL}. 
Indeed, we have 
\begin{\eq} \label{2:GiC}
  \left\|\left.\left\{ |\nabla \Psi_j (x)| \right\}_j\right|\ell_2\right\|\le \frac{c}{|x|^{n+1}}, \qquad x\in\rn
\end{\eq}
(cf. \cite[page 161]{D01}). H\"older's inequality yields 
\[
  |\Psi_j (x-y)-\Psi_j (x)|\le |y| \left(\int_{0}^1 |(\nabla \Psi_j) (x-ty)|^2   \di t \right)^\frac{1}{2}
\]
and furthermore using \eqref{2:GiC}
\begin{align}\begin{split} \label{GTL}
  \left\|\left\{\Psi_j (x-y)-\Psi_j (x)\right\}|\ell_2\right\|&\le |y| \left(\int_{0}^1 \left\|  \left|(\nabla \Psi_j) (x-ty) \right| |\ell_2\right\|^2  \di t   \right)^\frac{1}{2}
	\\&\le c\ \frac{|y|}{|x|^{n+1}}
\end{split}\end{align} 
for $|x|\geq 2 |y|$.
Using \eqref{26:GiG5}  we find
$\left\{\tilde{S_j}\right\}_{j\in\ganz}: \Lciv\hra\Lciv$ and $\left\{\tilde{S_j'}\right\}_{j\in\ganz}: \Hv\hra\Hv$
by means of Theorem \ref{vB:thm}. Here we have to show that
in particular assumption \eqref{43a:GG} is fulfilled. 
 If $\hat{\Psi}\equiv\psi$ and $\Psi_j(\cdot)\equiv 2^{jn}\Psi(2^j \cdot)$, then $\hat{\Psi}_j=\psi_j$ and $\tilde{S_j} f=\Psi_j*f \in C^\infty(\rn) \cap S'(\rn)$ by $\Lr\hra S'(\rn)$. 
In particular, $\tilde{S_j} f=\Psi_j*f$ makes sense pointwise for all $f\in\Lr$. 
Furthermore, 
\[ 
  \left(\sum_{j\in\ganz}\left|\Psi_j*f (x)\right|^2\right)^\frac{1}{2}\le \int_{\real} |f(y)| \left(\sum_{j\in\ganz}\left|\Psi_j (x-y)\right|^2\right)^\frac{1}{2} \di y
  \le c \int_{\real} \frac{|f(y)|}{|x-y|^n} \di y 
\] 
for all $x\in\rn$ and all $f\in \Lr$ with $x\notin\supp(f)$ where $\left\|\left\{\Psi_j (\cdot)\right\}|\ell_2\right\|\le c |\cdot|^{-n}$ (cf. \cite[page 161]{D01}).
This implies \eqref{43a:GG}. 
We note that dual of the extension of the multiplier of $\psi_j(-\cdot)$ coincides with $\tilde{S_j}$ on $D(\rn)$ by the same arguments as for \eqref{TG1}. Hence,
\[ 
  \left\{\tilde{S_j}\right\}_{j\in\ganz}: \Hv\hra\Hv.
\] 
We obtain 
\[ \left\{\tilde{S_j}\right\}_{j\in\ganz}: \Lv\hra\Lv \]
with the same norm estimates as in Theorem \ref{vB:thm}.
Hereby we emphasize that the operator $\tilde{S_j}$ is well-defined on $S'(\rn)$, in particular on $\Lr$. Moreover, \eqref{43a:GG} holds for $f\in \Lr$ in place of $f\in D(\rn)$.
\end{proof}

\begin{Rem} \label{GiR}
The vector-valued Fourier multiplier assertion proved Corollary \ref{LS3} paves the way to introduce predual Morrey versions $H^\vr A^s_{p,q}(\rn)$ of the Besov-Triebel-Lizorkin spaces $A^s_{p,q}(\rn)$. In particular it implies the independence of admitted resolutions of unity. 
One replaces the $L_p(\rn)$-norm in the definition of $A^s_{p,q}(\rn)$ by the $\Hr$-norm in order to define $H^\vr A^s_{p,q}(\rn)$. The vector-valued Fourier multiplier assertion in Corollary \ref{LS3} is also the key ingredient to obtain as in \cite[Section 2.3.3]{T83} the density of $S(\rn)$ in $H^\vr A^s_{p,q}(\rn)$. Moreover as in \cite[Section 2.11.2]{T83} one can show using our vector-valued duality assertions (Theorem \ref{TDp1:GG} and \ref{TDp2:GG}) also that the dual of $H^\vr A^{-s}_{p',q'}(\rn)$ is $L^r A^s_{p,q}(\rn)$ and furthermore that the dual of the completion of $S(\rn)$ with respect to $L^r A^s_{p,q}(\rn)$ is $H^\vr A^{-s}_{p',q'}(\rn)$. Here $L^r A^s_{p,q}(\rn)$ stands for the morreyfied versions of $A^s_{p,q}(\rn)$ which are defined by replacing the $L_p(\rn)$-norm in the definition of $A^s_{p,q}(\rn)$ by the $L^r_p(\rn)$-norm.  
\par
Moreover, in the one-dimensional case ($n=1$) Corollary \ref{LS3} implies  also Lizorkin representations of the Triebel-Lizorkin-Morrey spaces $L^r A^s_{p,q}(\real)$ (cf. \cite[Section 2.5.4]{T83}) using in addition Nikol'skij inequalities for Morrey spaces published in \cite[Thm. 2.2.9, Thm. 2.2.20]{Ros13}. We want to mention that the spaces $L^r A^s_{p,q}(\rn)$ are  studied, in particular, in \cite{YSY10, HS11, R, Tr12, Ros13}).
\end{Rem}

\subsubsection{Strongly singular integrals}
\begin{Def} 
Let $0<b<1$ and let $\varphi$ be a smooth  
cut-off function with $\varphi=1$ on $\{|\xi|\geq 1 \}$ and $\varphi=0$ on $\{|\xi|\leq 1/2 \}$.
If $f\in S(\rn)$, then we define \textit{strongly singular integrals}  as
\[
  (T_b f)(x) \equiv \int_{\xi\in\real^n} \frac{e^{i|\xi|^b}}{|\xi|^{nb/2}}\varphi(|\xi|) \hat{f}(\xi)e^{2\pi i x \xi} \di \xi
\]
(cf. \cite[p. 192]{SW94}).
\end{Def}

\begin{Corol} \label{LS2} 
Let $1<p<\infty$, $-\frac{n}{p}\leq r<0$, $-n < \vr < -\frac{n}{p'}$ and let $q\in [p,2]$ for $p\le 2$ and $q\in [2,p]$ for $p>2$. 
Then the following statements hold true.
\begin{enumerate} 
\item There are unique linear and bounded extensions of $T_b$ to $\Lci$ and to $\Hr$ denoted again by $T_b$ and satisfying 
\begin{align*}
&\left\{T_b\right\}_{j\in\nat_0}: \Lciv\hra\Lciv \quad\text{and}\quad\\& \left\{T_b\right\}_{j\in\nat_0}: \Hv\hra\Hv
\end{align*}
\item 
There are infinitely many linear and bounded extensions of $T_b$ to $\Lr$ denoted again  by $T_b$ such that
\[ \left\{T_b\right\}_{j\in\nat_0}: \Lv\hra \Lv.\] 
\end{enumerate}
\end{Corol}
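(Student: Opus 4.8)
The plan is to derive all four assertions from Theorem \ref{vB:thm}, applied to the constant sequence $T_j\equiv T_b$ ($j\in\nat_0$), together with the duality mechanism already used in Corollary \ref{vB:corol}. Thus I would verify the three hypotheses of Theorem \ref{vB:thm} for $T_b$: linearity together with $T_b:D(\rn)\to C(\rn)$; the pointwise bound \eqref{43a:GG} on $D(\rn)$; and the $L_p(\ell_q,\rn)$-inequality \ref{vb2c:GG}. The first is elementary: since $\varphi$ vanishes near the origin and $0<b<1$, the multiplier $m_b(\xi)=\rme^{\rmi|\xi|^{b}}|\xi|^{-nb/2}\varphi(|\xi|)$ is a smooth function all of whose derivatives are bounded, hence a multiplier of $S'(\rn)$; so $T_b$ is a continuous linear operator on $S'(\rn)$ (in particular defined on $\Lr\hra S'(\rn)$), and for $f\in D(\rn)$ the function $m_b\widehat f$ and all its polynomial multiples lie in $L_1(\rn)$, whence $T_b f\in C^\infty(\rn)\cap L_\infty(\rn)$. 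For the third hypothesis I would invoke the classical $L_p$-boundedness of strongly singular integrals: $T_b:L_p(\rn)\hra L_p(\rn)$ for every $1<p<\infty$, since the admissibility inequality for the multiplier $\rme^{\rmi|\xi|^{b}}|\xi|^{-nb/2}$ reads $|1/p-1/2|\le(nb/2)/(nb)=1/2$, which holds on all of $(1,\infty)$; the exponent $nb/2$ is exactly the one making this true for the whole range. As $T_b$ is linear and the same operator is used for all $j$, Remark \ref{62:GG} upgrades this to $\{T_b\}_{j\in\nat_0}:L_p(\ell_q,\rn)\hra L_p(\ell_q,\rn)$ for precisely the $q$ admitted in the corollary (``$q$ between $2$ and $p$''), which is hypothesis \ref{vb2c:GG} in its permissible form.

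The heart of the matter is the size condition \eqref{43a:GG}. The convolution kernel $K_b=m_b^{\vee}$ coincides off the origin with a $C^\infty$ function, and for $f\in D(\rn)$ and $y\notin\supp f$ one has $(T_b f)(y)=\int_{\rn}K_b(y-z)f(z)\,\di z$ because $0\notin y-\supp f$; hence \eqref{43a:GG} follows once one establishes $|K_b(x)|\le c\,|x|^{-n}$ for $x\ne0$. For $|x|$ bounded away from the origin this is a routine non-stationary-phase argument: on $\supp\varphi$ the gradient of the phase $2\pi x\cdot\xi+|\xi|^{b}$ has size $\gtrsim|x|$ once $|x|$ is large, so repeated integration by parts yields rapid decay of $K_b$. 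Near the origin, stationary phase at the critical frequency $|\xi|\sim|x|^{-1/(1-b)}$ gives, after balancing the amplitude $|\xi|^{-nb/2}$ against the Hessian determinant of size $|\xi|^{(b-2)n}$, precisely the borderline bound $|K_b(x)|\lesssim|x|^{-n}$. This is the classical strongly singular integral estimate, already recorded in \cite{SW94} (from which the definition of $T_b$ is taken); it is the step requiring the most care, but it is entirely standard, and everything else is an application of the machinery.

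Granting the three hypotheses, Theorem \ref{vB:thm}\ref{60:GG} furnishes the unique bounded extension $\widetilde{T_b}:\Lci\hra\Lci$ with $\{\widetilde{T_b}\}_{j\in\nat_0}:\Lciv\hra\Lciv$, and Theorem \ref{vB:thm}\ref{59:GG} (applied with the coupled parameter $\vr=-n-r$, admissible exactly when $-n<\vr<-n/p'$) gives bounded extensions to $\Lr$ with $\{T_b\}_{j\in\nat_0}:\Lv\hra\Lv$; the non-uniqueness in the second assertion is the content of Remark \ref{62:GG}. For the predual statements I would argue as in Corollary \ref{vB:corol}: since $m_b$ is radial, the transpose of $T_b$ relative to the bilinear pairing $\langle f,g\rangle=\int_{\rn}fg$ again has symbol $m_b(-\xi)=m_b(\xi)$, i.e. $T_b$ is symmetric on $D(\rn)$; therefore, by the computation of \eqref{TG1} adapted as in Corollary \ref{vB:corol}, the dual operator $\widetilde{T_b}'$ supplied by Theorem \ref{vB:thm}\ref{60b:GG} agrees with $T_b$ on $D(\rn)$, so it is a genuine bounded extension of $T_b$ to $\Hr$ and satisfies $\{\widetilde{T_b}'\}_{j\in\nat_0}:\Hv\hra\Hv$. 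Finally, at the endpoint $r=-n/p$ one has $\Lci=L_p(\rn)=\Lr$, so the corresponding claims reduce to the classical $L_p$- (resp.\ $L_p(\ell_q)$-) boundedness used above, and all assertions of the corollary are accounted for.
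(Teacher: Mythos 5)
Your proposal is correct and follows essentially the same route as the paper: verify the three hypotheses of Theorem \ref{vB:thm} for $T_b$ (with the kernel bound \eqref{43a:GG} coming from the strongly singular integral estimate of \cite{SW94}, the $L_p$-boundedness from the classical multiplier theorem, and the $\ell_q$-valued inequality via Remark \ref{62:GG}, which is exactly why $q$ is restricted to lie between $2$ and $p$), then apply the theorem together with the duality mechanism. The only cosmetic differences are that you establish $T_bf\in C^\infty(\rn)\cap L_\infty(\rn)$ directly from the symbol rather than via $W^k_p$-boundedness and Sobolev embedding, and you spell out the stationary-phase kernel estimate and the formal self-adjointness that the paper leaves to citations.
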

\begin{proof} 
  
$T_b$ satisfies \eqref{43a:GG} by \cite[p. 192]{SW94}, see also \cite[Chapt. 5, Sect. 6.8]{D01}. 
The strongly singular integrals $T_b$ are bounded on $L_{p}(\real^n)$, $1<p<\infty$, by \cite[Section 6.8]{D01} and the references given there. 
Moreover, $T_b:W^k_p (\rn)\hra W^k_p (\rn)$ for all $k \in \nat$ using the lift operator (which is the Fourier multiplier corresponding to $(1+|\cdot|^2)^{\sigma/2}$ for $\sigma\in\real$) and, in particular,
it follows that $T_b f\in C^\infty$ for $f\in D(\rn)$ by well-known Sobolev embeddings. 
Having in mind Remark \ref{62:GG} we obtain the assertion by Theorem \ref{vB:thm}. 
\end{proof} 

\subsubsection{Bochner-Riesz multipliers}

\begin{Def}
Let $\lambda >0 $ and let $f\in S(\rn)$. We define \textit{Bochner-Riesz multipliers} 
as 
\[
  (B^\lambda f)(x) \equiv \int_{|\xi|\leq 1} (1-|\xi|^2)^\lambda \hat{f}(\xi)e^{2\pi i x \xi} \di \xi~.
\]
\end{Def} 

It is well-known that $B^\lambda f$ can be reformulated as
\begin{\eq} \label{55:GG}
  (B^\lambda f)(x) = c \lim_{\varepsilon \searrow 0} \int_{|x-y|\geq \varepsilon} \frac{J_{n/2+\lambda}(2\pi|x-y|)}{|x-y|^{n/2+\lambda}}f(y) \di y
\end{\eq}
where $J_\alpha$ stands for the Bessel function (cf. \cite[Lemma 8.18]{D01} or \cite[(10.2.1)]{G04}). 

\begin{Corol} 
If $\lambda \ge (n-1)/2$ then the statements of Corollary \ref{LS2} hold with  $B^\lambda$ in place of $T_b$.
\end{Corol}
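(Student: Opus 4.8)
The plan is to show that, for $\lambda\ge (n-1)/2$, the operator $B^\lambda$ (the same operator for every index $j$) satisfies all the hypotheses of Theorem \ref{vB:thm}, invoking Remark \ref{62:GG} to produce the vector-valued boundedness required in hypothesis (iii) for the admitted range of $q$; the three assertions then follow by a mechanical transcription of the proof of Corollary \ref{LS2}. So the task reduces to checking (i) the mapping $D(\rn)\to C(\rn)$, (ii) the pointwise estimate \eqref{43a:GG}, and (iii) (via Remark \ref{62:GG}) the $L_p(\rn)$-boundedness.

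First I would dispose of (i). The symbol $m_\lambda(\xi)=\chi_{\{|\xi|\le 1\}}(1-|\xi|^2)^\lambda$ is bounded and compactly supported, hence $m_\lambda\in L_1(\rn)$; for $f\in D(\rn)$ the product $m_\lambda\widehat f$ is therefore an integrable, compactly supported function, so its inverse Fourier transform $B^\lambda f$ is bounded and continuous (in fact, differentiation under the integral sign shows $B^\lambda f\in C^\infty(\rn)$). Thus $B^\lambda:D(\rn)\to C(\rn)$, and $B^\lambda$ is linear, so hypothesis (i) holds. (One could alternatively argue, as for $T_b$ in Corollary \ref{LS2}, via $B^\lambda:W^k_p(\rn)\hra W^k_p(\rn)$ and Sobolev embeddings, since $B^\lambda$ commutes with the lift operators.)

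The heart of the matter is hypothesis \eqref{43a:GG}, and this is precisely where the restriction $\lambda\ge (n-1)/2$ enters. I would start from the Bessel-function representation \eqref{55:GG}: up to a constant, $B^\lambda$ is the convolution operator with kernel $K(w)=J_{n/2+\lambda}(2\pi|w|)\,|w|^{-n/2-\lambda}$. Near the origin $t\mapsto J_{n/2+\lambda}(t)\,t^{-n/2-\lambda}$ is bounded, so $|K(w)|\le c\le c|w|^{-n}$ for $|w|\le 1$; for $|w|\ge 1$ the asymptotics $|J_{n/2+\lambda}(t)|\le c\,t^{-1/2}$ give $|K(w)|\le c\,|w|^{-n/2-\lambda-1/2}$, and the exponent satisfies $-n/2-\lambda-1/2\le -n$ exactly when $\lambda\ge (n-1)/2$. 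Hence $|K(w)|\le c|w|^{-n}$ for all $w\ne 0$, and for $f\in D(\rn)$, $y\notin\supp f$ the integral in \eqref{55:GG} is absolutely convergent and dominated by $c\int_{\rn}|f(z)|\,|y-z|^{-n}\,\di z$, with $c$ independent of $j$, $f$, $y$. This is the estimate of Soria and Weiss \cite{SW94} (see also \cite[Lemma 8.18 and Sect. 6.8]{D01}); the borderline value $\lambda=(n-1)/2$ is the only delicate case and the estimate is tight there.

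It remains to supply the boundedness needed for hypothesis (iii). At or above the critical index $\lambda\ge(n-1)/2$ the multiplier $m_\lambda$ generates a bounded operator on $L_p(\rn)$ for every $1<p<\infty$ — indeed, by the estimate of the previous paragraph its kernel is a Calder\'on--Zygmund kernel (cf. \cite[Sect. 8.2]{D01} or \cite[Sect. 10.2]{G04}). Since $q$ lies between $2$ and $p$ in the indicated sense, Remark \ref{62:GG} upgrades this to $B^\lambda:L_p(\ell_q,\rn)\hra L_p(\ell_q,\rn)$, which is hypothesis (iii). Now Theorem \ref{vB:thm}, part \ref{60:GG}, yields the unique extension with $\left\{B^\lambda\right\}_{j\in\nat_0}:\Lciv\hra\Lciv$; since $m_\lambda$ is real and radial, $B^\lambda$ is formally self-adjoint, so part \ref{60b:GG} gives the unique extension to $\Hr$ with $\left\{B^\lambda\right\}_{j\in\nat_0}:\Hv\hra\Hv$; and part \ref{59:GG}, together again with Remark \ref{62:GG}, gives the (non-unique) extensions to $\Lr$ with $\left\{B^\lambda\right\}_{j\in\nat_0}:\Lv\hra\Lv$. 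The only genuine obstacle is the kernel estimate of the third paragraph, and once it is in place everything reduces to applying Theorem \ref{vB:thm} verbatim.
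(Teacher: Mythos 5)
Your proof is correct, and for the critical index $\lambda=(n-1)/2$ it is essentially the paper's own argument: the representation \eqref{55:GG}, the Bessel bound $|J_{n/2+\lambda}(t)|\le c\,t^{-1/2}$ to get \eqref{43a:GG}, the known $L_p$-boundedness, and then Theorem \ref{vB:thm} combined with Remark \ref{62:GG}. Where you genuinely diverge is the supercritical range $\lambda>(n-1)/2$: you note that the kernel bound $|K(w)|\le c|w|^{-n}$ only improves as $\lambda$ grows, so \eqref{43a:GG} holds uniformly and the whole range $\lambda\ge(n-1)/2$ is dispatched in a single application of Theorem \ref{vB:thm}; the paper instead makes a case distinction and, for $\lambda>(n-1)/2$, dominates $|B^\lambda f|$ pointwise by the Hardy--Littlewood maximal function $Mf$, importing the already proved bound $\{M\}_j:\Lciv\hra\Lciv$ from Corollary \ref{LS4} to obtain the norm estimate without rerunning Step 1 of Theorem \ref{vB:thm}. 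Your unified treatment is arguably cleaner and makes the threshold transparent as the exact point where the exponent $-n/2-\lambda-1/2$ reaches $-n$; the paper's maximal-function detour buys a shortcut to the $\Lciv$-estimate at the price of an extra case. Two minor caveats, neither a gap: the parenthetical claim that the size estimate makes the kernel a Calder\'on--Zygmund kernel is not literally sufficient for $L_p$-boundedness (a H\"ormander-type regularity condition is also needed), but the $L_p$-boundedness itself is the standard cited fact (\cite[Thm.\ 8.15]{D01}); and the formal self-adjointness needed for Assertion \ref{60b:GG} is best justified by the evenness of the radial kernel, $K(-w)=K(w)$, with respect to the bilinear pairing used in \eqref{TG1}, which is what your appeal to radiality amounts to.
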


\begin{proof} Let $\lambda = (n-1)/2$ be the critical index.
  Using   \eqref{55:GG} as well as the estimate $J_{n/2+\lambda}(|x|)\leq {c}{|x|^{-1/2}}$ (see e.g. \cite[Appendix B.6]{G04}) we see that $B^\lambda$ satisfies \eqref{43a:GG}. 
The $L_p$-boundedness of $B^\lambda$ at the critical index for $1<p<\infty$ is known (cf. \cite[Thm 8.15]{D01}). 
We also have $B^\lambda: D(\rn)\rightarrow \text{Lip}(\rn)$ by the same arguments as in \cite[proof of Step 2 of Thm. 1.1, p. 8]{RT13} taking into account the convolution structure of $B^\lambda$. Thus 
the assertion for $\lambda = (n-1)/2$ is a consequence of Theorem \ref{vB:thm}. Let $\lambda > (n-1)/2$. Then $|(B^\lambda f)|$ can be dominated pointwise by the Hardy-Littlewood maximal function $Mf$ 
for $f\in D(\rn)$ (cf. \cite[Exercise 10.2.8]{G04}). Hence,
  	\[
	  \left\|B^\lambda f_j|\Lv\right\|\le c\left\|f_j|\Lv\right\|
	\]
	where $c$ does not depend on $\{f_j\}_{j=0}^\infty\in \Lciv$ with $f_j\in D(\rn)$ for all $j$.
	As above we have $B^\lambda: D(\rn)\rightarrow \text{Lip}(\rn)$. As in the proof of Theorem \ref{vB:thm} we find an unique extension of $B^\lambda$ denoted again as $B^\lambda$ such that
	\[
	  B^\lambda:\Lciv\hra \Lciv.
	\]
	Hereby, we mention that the constant in \eqref{43a_2:GG} is allowed to depend on the fixed sequence of functions. The proof of the remaing parts of the corollary for $\lambda > (n-1)/2$ follows the same lines 
	as in the proof of Theorem \ref{vB:thm}. 
\end{proof}



\end{document}